\newtheorem{thm}{Theorem}%[section]
\newtheorem{lem}[thm]{Lemma}
\newtheorem*{lem*}{Lemma}
\newtheorem*{conj*}{Conjecture}
\newtheorem{prop}[thm]{Proposition}
\newtheorem*{cor*}{Corollary}
\newtheorem*{prop*}{Proposition}
\newtheorem*{res}{Result}
\theoremstyle{definition}
\newtheorem*{def*}{Definition}
\newtheorem{rem}[thm]{Remark}
\newtheorem*{rem*}{Remark}
\DeclareMathOperator{\h}{H}
\newcommand{\triv}{{\mathbf{1}}}
\def\R{\mathbb R}
\def\N{\mathbb N}
\def\Z{\mathbb Z}
\def\A{\mathbb A}
\def\Q{\mathbb Q}
\def\C{\mathbb C}
\def\wt{\widetilde}
\def\ira{\stackrel{\sim}{\rightarrow}}
\def\ra{\rightarrow}
\def\g{\mathfrak g}
\def\n{\mathfrak n}
\def\a{\mathfrak a}
\def\b{\mathfrak b}
\def\q{\mathfrak q}
\def\ep{\varepsilon}
\def\h{\mathfrak h}
\def\l{\mathfrak l}
\def\m{\mathfrak m}
\def\p{\mathfrak p}
\def\wij{w_{(I,J)}}
\def\<{\langle}
\def\>{\rangle}
\def\i{\mathbbm i}
\def\j{\mathbbm j}
\title[]%
{On the Eisenstein Cohomology of Odd Orthogonal Groups}
\author{Gerald Gotsbacher \and Harald Grobner}
\date{\today}
\address{Gerald Gotsbacher: Department of Mathematics, University of Toronto, 40 St. George St., Toronto, ON, M5S 2E4 Canada}
\email{gerald@math.tifr.res.in}
\address{Harald Grobner: {\it Current address}: Max-Planck Institut f\"ur Mathematik\\ Vivatsgasse 7\\ 53111 Bonn, Germany}
\email{harald.grobner@univie.ac.at}
\urladdr{http://homepage.univie.ac.at/harald.grobner}
\keywords{Cohomology of Arithmetic Groups, Eisenstein Cohomology,
Cuspidal Automorphic Representation, Eisenstein Series}
\subjclass[2010]{Primary: 11F75; Secondary: 11F70, 11F55, 22E55}
\thanks{The second named author was supported financially by the Max-Planck Institut f\"ur Mathematik (MPIM), Bonn, the Erwin Schr\"odinger Institute for Mathematical Physics (ESI), Vienna, and the Austrian Science Found (FWF), project no. P21090.}
\thanks{This is the preprint of an article which is going to appear in {\it Forum Math.}}
\begin{document}

\begin{abstract}
The paper investigates a significant part of the automorphic, in
fact of the so-called Eisenstein cohomology of split odd orthogonal
groups over $\Q$. The main result provides a description of residual
and regular Eisenstein cohomology classes for \emph{maximal}
parabolic $\Q$-subgroups in case of \emph{generic} cohomological
cuspidal automorphic representations of their Levi subgroups. That
is, such identifying necessary conditions on these latter
representations as well as on the complex parameters in order for
the associated Eisenstein series to possibly yield non-trivial
classes in the automorphic cohomology.
\end{abstract}
\maketitle

\section*{Introduction}
The main objective of the effort to be unfolded here is to study the
Eisenstein cohomology of the $\Q$-split odd orthogonal groups
$G=SO_{2n+1}$. Specifically, it is the contribution to the latter
stemming from maximal parabolic $\Q$-subgroups that is dealt with.

To put ourselves in medias res let $\g$ be the Lie algebra of the
group of real points of $G$, $K$ a maximal compact subgroup therein
and $\mathcal{A}(G)$ the $(\g,K)$-module of ad\`elic automorphic
forms on $G$. If $E$ is a finite dimensional irreducible rational
representation of $G(\R)$, the automorphic cohomology of $G$ twisted
by $E$ is defined to be $H^*(\g,K;\mathcal{A}(G)\otimes E)$. Let
then $P\subset G$ be a maximal parabolic $\Q$-subgroup with Levi subgroup $L$ and maximal central
$\Q$-split torus $A\subseteq L$. We consider the
Eisenstein series $E_P(f,\Lambda)$ associated to a cohomological,
globally generic, cuspidal automorphic representation $\pi$ of
$L(\A)$, resp. an element $f$ in the representation induced from
$\pi$ and a complex parameter $\Lambda\in\a^*_\C$.
Langlands' theory of Eisenstein series ensures convergence of
$E_P(f,\Lambda)$ on a right half plane with respect to $\Lambda$,
the existence of a meromorphic continuation to the entire complex
plane $\a^*_{\C}$ and gives a restriction on the location of its
possible poles in terms of affine lines in it. Moreover,
$E_P(f,\Lambda)$ (resp. its residue) gives rise to an element of
$\mathcal{A}(G)$ for fixed but arbitrary $\Lambda$. In order for it
to represent a class in the automorphic cohomology $\Lambda$ has to
be of a specific form $\Lambda_w$ involving what is called the set
$W^P$ of Kostant representatives $w$ for $P$. At this point we are
presented with the key task which is to determine the cohomological
cuspidal automorphic representations of $L(\A)$ and the Kostant
representatives for $P$. With these data at hand it remains to
decide whether the given Eisenstein series is holomorphic or has a
pole at the point $\Lambda_w$. In solving the latter the general
Theorems available in the theory of Eisenstein cohomology provide us
with a closing answer to the initial question, which roughly assumes the following form: (see Theorems
\ref{mainthm1} \& \ref{mainthm2} as well as section \ref{regec} for the precise statements)

\begin{res}
Let $P$ be the standard maximal parabolic $\Q$-subgroup of
$G=SO_{2n+1}$ with Levi subgroup $L\cong GL_k\times SO_{2(n-k)+1}$.
Further, let $\alpha$ be the only simple root of $G$ which does not vanish
identically on $A$. Suppose given a non-trivial class in
$H^*(\g,K;\mathcal{A}(G)\otimes E)$ associated to a choice $(\pi,w)$
of a cohomological, globally generic, cuspidal automorphic
representation $\pi$ of $L(\A)$ and a Kostant representative $w\in
W^P$. As $\pi=\chi(\sigma\hat\otimes\tau)$ with $\chi$ the central
character, $\sigma$ a cohomological cuspidal automorphic
representation of $GL_k$ and $\tau$  a cohomological, globally
generic, cuspidal automorphic representation of $SO_{2(n-k)+1}$ in
the \emph{residual case} $\pi$ is either of the form
\begin{enumerate}
\item $d\chi=\Lambda_w=\frac{k}{2}\alpha$, $\sigma$ self-dual and such that the Rankin-Selberg $L$-function of $\sigma\times\tau$ does not vanish at $\frac{1}{2}$ (a condition, which we set empty if $k=n$). If in addition $k$ is even and the central character of $\sigma$ is trivial, then $k\geq 4$ and $\sigma$ is a self-dual Weak Langlands functorial lift of a globally generic cuspidal automorphic representation of $SO_k(\A)$. Or
\item $d\chi=\Lambda_w=k\alpha$, $k\neq n$ is even and $\sigma$ self-dual such that the Rankin-Selberg $L$-function of $\sigma\times\tau$ has a pole at $1$.
\end{enumerate}
The \emph{regular case} pertaining to the situation that $\pi$ either doesn't meet either one of conditions $(1)$ and $(2)$ or it does while producing regular values for the Eisenstein series is settled in light of the description of regular Eisenstein classes available in general.

Finally, we find lower and upper bounds for the degree of the Eisenstein class constructible from the above in both the residual and the regular case.
\end{res}

The structure of the paper mirrors closely the steps mentioned in
the above outline:

Section 1 introduces the automorphic cohomology of a reductive
algebraic $\Q$-group and provides a brief outline of Eisenstein
cohomology. In particular, it sketches the decomposition of the
automorphic cohomology along the cuspidal support and gathers the
main Theorems pertaining to the construction of cohomology classes
by means of Eisenstein series.

Section 2 gives a terse description of so-called Kostant data by
virtue of a method the idea behind which we learnt from N. Grbac.
Its main advantage is to be seen in a formulaic description of the
action of Kostant representatives on arbitrary weights on the Cartan
subalgebra of $\g$, and consequently of the Kostant data relevant to
the aforementioned construction.

Section 3 lists the cuspidal representations of the Levi subgroups
with non-trivial cohomology and provides lower and upper bounds for
their contribution to cohomology drawing from well-known results by
a number of people (for precise references cf. section 3).

Section 4 recounts H. Kim's determination of the part of the
residual spectrum of $G$ accounted for by the maximal parabolic
$\Q$-subgroups in order to derive the possible poles of Eisenstein
series.

Finally, sections 5 and 6 present our results on residual and
regular Eisenstein cohomology classes for maximal parabolic
$\Q$-subgroups of $G=SO_{2n+1}$.\\\\
On a final note, we want to remark that at the time of composing
this work, N. Grbac and J. Schwermer were addressing similar
questions for split $Sp_{2n}$. An analogous result exists for split
$SL_n$, see \cite{schwSLn}. The interested reader may also consult
the first author's paper \cite{gotsb}, where regular Eisenstein
cohomology classes were constructed for the inner forms $SO(n,2)$ of
$SO_{n+2}$ or the second author's papers \cite{grob22} and
\cite{grob11} on residual and regular Eisenstein cohomology of
$Sp(2,2)$ and $Sp(1,1)$.

\subsection*{Acknowledgements}
The authors would like to thank Neven Grbac and Goran Muic for
valuable discussions. We are also grateful to Dihua Jiang, Joachim Schwermer and Birgit
Speh for helpful remarks. Both authors would like to thank the Erwin Schr\"odinger Institute for Mathematical Physics, Vienna, for its hospitality, where this paper took its final form. The second named author would also like to thank the Max-Planck-Institut f\"ur Mathematik, Bonn, where the major part of this work was written.

\subsection*{Notation and Conventions} Throughout this paper $G$ will
denote a connected, reductive, but most often semisimple algebraic group
over $\Q$ of rank $rk_\Q(G)\geq 1$ with finite center. Lie algebras
of groups of real points of algebraic groups will be denoted by the
same but gothic letter, e.g. $\g=\textrm{Lie}(G(\R))$. The
complexification of a Lie algebra will be denoted by subscript
``$\C$'', e.g. $\g_\C=\g\otimes_\R\C$.\\\\ We use the standard
terminology and hypotheses concerning algebraic groups and their
subgroups to be found in \cite{moewal} I.1.4-I.1.12. In particular,
we assume that a minimal parabolic subgroup $P_0$ has been fixed and
that $K_\A=K_\R\times K_{\A_f}$ is a maximal compact subgroup of the
group $G(\A)$ of adelic points of $G$ which is in \emph{good
position} with respect to $P_0$ (\cite{moewal}, I.1.4). Then
$K=K_\R$ is maximal compact in $G(\R)$, hence has an associate
Cartan involution $\theta$. If $H$ is a
subgroup of $G$, we let $K_H=K\cap H(\R)$.\\\\
Assume that $L_0$ is the unique Levi subgroup of $P_0$ which is
invariant under $\theta$ and $N_0$ is an unipotent radical of $P_0$
such that we have the Levi decomposition $P_0=L_0N_0$. If we
additionally denote by $A_0$ a maximal, central $\Q$-split torus in
$L_0$ then we also get the Langlands decomposition $P_0=M_0A_0N_0$.
As usual, $M_0=\bigcap_\chi\ker\chi^2$, $\chi$ ranging over the
group $X(L_0)$ of all $\Q$-characters on $L_0$. Let $P$ be a
standard parabolic $\Q$-subgroup of $G$ with respect to $P_0$. It
has a unique Levi decomposition $P=L_PN_P$, with $L_P\supseteq L_0$
and also a unique Langlands decomposition $P=M_PA_PN_P$ with unique
$\theta$-stable split component $A_P\subseteq A_0$. If it is clear
from the context we will also omit the subscript ``$P$''. We write
$\Delta(P,A)$ for the set of weights of the adjoint action of $P$
with respect to $A_P$. $\rho_P$ denotes the half-sum of these
weights. In particular, $\rho=\rho_{P_0}$ is the half sum of
positive $\Q$-roots of $G$ with respect to $A_0$.\\Extend the Lie
algebra $\a$ of $A(\R)$ to a Cartan subalgebra $\h$ of $\g$ by
adding a $\theta$-stable Cartan subalgebra $\b$ of $\m$. The absolute root system of
$\g$ is denoted $\Delta=\Delta(\g_\C,\h_\C)$, a simple subsystem
(compliant with the requirement that positivity on the system of
absolute roots shall be compatible with the positivity on the set
$\Delta_\Q$ of $\Q$-roots given by the choice of the minimal pair
$(P_0,A_0)$) is denoted $\Delta^\circ$. We also write
$\Delta^\circ_M$ for the set of absolute simple roots of $\m$ with
respect to $\b$ (so $\Delta^\circ=\Delta^\circ_G$). The Weyl group
associated to $\Delta$ is denoted $W=W(\g_\C,\h_\C)$. We let
$W^P=\{w\in W|w^{-1}(\alpha)>0\quad\forall\alpha\in\Delta^\circ_M\}$. The elements of $W^P$ are called {\it Kostant representatives}.\\
Using the fact that $K_\A$ is in good position, we can extend the
standard \emph{Harish-Chandra height-function} $H_P:P(\A)\ra\a^*$
given by $\prod_{p}|\chi(p)|_p=e^{\<\chi,H_P(p)\>}$, $\chi\in X(L)$
viewed as an element of $\a_\C^*$, to a function on all of $G(\A)$
by setting $H_P(g):=H_P(p)$, $g=kp$. \\\\
Let $G$ be a connected, \emph{reductive} group over $\Q$ and $\chi$
a central character. As usual $L^2_{dis}(G(\Q)\backslash G(\A))$
(resp. $L^2_{dis}(G(\Q)\backslash G(\A),\chi)$) denotes the discrete
spectrum of $G$ (resp. the part of it consisting of functions with
central character $\chi$). It can be written as the direct sum of
the cuspidal spectrum $L^2_{cusp}(G(\Q)\backslash G(\A))$ (resp.
$L^2_{cusp}(G(\Q)\backslash G(\A),\chi)$) and the residual spectrum
$L^2_{res}(G(\Q)\backslash G(\A)$ (resp. $L^2_{res}(G(\Q)\backslash
G(\A),\chi)$). By \cite{gelf} the space $L^2_{dis}(G(\Q)\backslash
G(\A),\chi)$, decomposes as direct Hilbert sum over all irreducible,
admissible representations $\pi$ of $G(\A)$ with central character
$\chi$, each of which occurring with finite multiplicity
$m_{dis}(\pi)$. The same is therefore true for the cuspidal (resp.
residual) spectrum, if we replace the multiplicity by $m(\pi)$
(resp. $m_{res}(\pi)$). Every $\pi$ can be written as a restricted
tensor product $\pi=\otimes'_p\pi_p$, where $p$ is a place of $\Q$.
i.e. either a rational prime or $\infty$ and $\pi_p$ a local
irreducible, admissible representation $\pi_p$ of $G(\Q_p)$,
\cite{flath}. Further, $\pi$ is (and therefore all $\pi_p$ are,
simultaneously) unitary if and only if $\chi$ is. Then $\pi$ is the
completed restricted tensor product $\pi=\hat\otimes'_p\pi_p$. \\\\
For any $G(\A)$-representation $\sigma$, we will write
$\sigma^\infty$ for the space of its smooth vectors and
$\sigma_{(K)}$ for the space of $K$-finite vectors. Clearly, if
$\sigma$ is unitary, then $\sigma^\infty_{(K)}$ is a unitary
$(\g,K,G(\A_f))$-module.

\section{Generalities on Automorphic forms and Cohomology}
\subsection{}
We start our study with the space of automorphic forms $\mathcal
A(G)=\mathcal A(G(\Q)\backslash G(\A))$ on $G(\A)$. It is a
$(\g,K,G(\A_f))$-module and hence it makes sense to talk about its
$(\g,K)$-cohomology (to be called the {\it automorphic cohomology of
$G$}) which we may also twist by an irreducible rational
representation $E$ of $G(\R)$ of highest weight $\lambda$ on a
finite-dimensional, complex vector space:
$$H^q(G,E):=H^q(\g,K,\mathcal A(G)\otimes E).$$ Clearly,
$H^q(G,E)$ carries a $G(\A_f)$-module structure, induced from the action of $G(\A_f)$ on $\mathcal A(G)$, which we shall now investigate.\\\\
In order to do so, we shall analyze the cohomological automorphic
representations $\pi$ of $G$. Recall (\cite{langaut}, Prop. 2) that
a representation $\pi$ of $G(\A)$ is automorphic if and only if it
is an irreducible constituent of a globally (normalized) induced
representation Ind$^{G(\A)}_{P(\A)}[\sigma]$, $P$ being a parabolic
subgroup of $G$ and $\sigma$ a cuspidal automorphic representation
of the Levi $L_P(\A)$. Hence, cuspidal automorphic representations
of Levi subgroups $L_P$ will play a crucial role in the
determination of $H^q(G,E)$. In fact, we may divide the space of
automorphic forms into two parts, $\mathcal A(G)=\mathcal
A_{cusp}(G)\oplus\mathcal A_{Eis}(G),$ where $\mathcal A_{cusp}(G)$
is the space of cuspidal automorphic forms and $\mathcal A_{Eis}(G)$
a natural complement spanned as a representation by all irreducible
subquotients of induced representations
Ind$^{G(\A)}_{P(\A)}[\sigma]$ as above, but with $P$ proper.
Therefrom the automorphic cohomology of $G$ inherits a natural
decomposition as $G(\A_f)$-module:

\begin{equation}\label{eq:dec}
H^q(G,E)=H^q_{cusp}(G,E)\oplus H^q_{Eis}(G,E).
\end{equation}

\subsection{} Let us refine this decomposition even further. As one may guess from
the characterization of automorphic representations as subquotients
of parabolically induced representations, there should be somehow a
refinement of (\ref{eq:dec}) which involves all parabolic subgroups
$P$ of $G$ and cuspidal automorphic representations $\pi$ of
$L_P(\A)$. This is, indeed, true and we will briefly discuss this
refined decomposition as it will serve as the starting-point of our
further investigations. \\\\
First of all, $\mathcal A(G)$ admits a certain decomposition as a
direct sum with respect to the classes $\{P\}$ of associate
parabolic $\Q$-subgroups $P\subseteq G$. This relies on such a
decomposition of the space $V_G$ of $K$-finite, left
$G(\Q)$-invariant, smooth functions $f: G(\A)\ra\C$ of uniform
moderate growth, first proved by Langlands in a letter to Borel,
\cite{lang2}. See also \cite{borlabschw} Thm. 2.4:
$V_G=\bigoplus_{\{P\}}V_G(\{P\}),$ where $V_G(\{P\})$ denotes the
space of elements $f$ in $V_G$ which are negligible along $Q$ for
every parabolic $\Q$-subgroup $Q\subseteq G$, $Q\notin\{P\}$.
Putting $\mathcal A_{P}(G)=V_G(\{P\})\cap\mathcal A(G)$ we made the
first of two steps in the decomposition of $\mathcal A(G)$ alluded
to above:

$$\mathcal A(G)=\bigoplus_{\{P\}}\mathcal A_{P}(G).$$
Observe that $\mathcal A_{G}(G)\subset
V_G(\{G\})=L^2_{cusp}(G(\Q)\backslash G(\A))^\infty_{(K)}$. Hence,
we see that the following holds:

\begin{prop}
$$H^q_{cusp}(G,E)=H^q(\g,K,\mathcal A_{G}(G)\otimes E)$$ and
$$H^q_{Eis}(G,E)=\bigoplus_{\{P\},P\neq G}H^q(\g,K,\mathcal A_{P}(G)\otimes E).$$
\end{prop}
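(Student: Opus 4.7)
The plan is to combine the two decompositions of $\mathcal A(G)$ already at hand, namely
$$\mathcal A(G)=\mathcal A_{cusp}(G)\oplus\mathcal A_{Eis}(G)\quad\text{and}\quad \mathcal A(G)=\bigoplus_{\{P\}}\mathcal A_P(G),$$
and to match them up via the observation $\mathcal A_G(G)\subset V_G(\{G\})=L^2_{cusp}(G(\Q)\backslash G(\A))^\infty_{(K)}$ already recorded before the proposition. Since $(\g,K)$-cohomology commutes with arbitrary direct sums in the coefficient module, the statement of the proposition will follow once the two direct sum decompositions are identified factor by factor in the appropriate way.

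First I would verify the identity $\mathcal A_G(G)=\mathcal A_{cusp}(G)$. The inclusion $\mathcal A_G(G)\subset\mathcal A_{cusp}(G)$ is immediate from the observation already stated in the excerpt, since $\mathcal A_G(G)$ is contained in $L^2_{cusp}(G(\Q)\backslash G(\A))^\infty_{(K)}\cap\mathcal A(G)=\mathcal A_{cusp}(G)$. For the converse, I would recall that a cuspidal automorphic form vanishes along every proper parabolic $\Q$-subgroup $Q$ of $G$ (its constant term along $N_Q$ is identically zero), hence is in particular negligible along any such $Q\notin\{G\}$. Therefore $\mathcal A_{cusp}(G)\subset V_G(\{G\})\cap\mathcal A(G)=\mathcal A_G(G)$, giving equality.

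Next, taking the complement of this summand in both decompositions yields
$$\mathcal A_{Eis}(G)=\bigoplus_{\{P\},\,P\neq G}\mathcal A_P(G),$$
since $\mathcal A_{Eis}(G)$ was defined precisely as the natural complement of $\mathcal A_{cusp}(G)=\mathcal A_G(G)$ inside $\mathcal A(G)$. Finally, I would apply the functor $H^q(\g,K,\,\cdot\,\otimes E)$ to both sides of each of the two identities. Since this functor commutes with finite direct sums, the first equality of the proposition follows from $\mathcal A_G(G)=\mathcal A_{cusp}(G)$, and the second from the complementary identification just established.

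I do not expect any substantial obstacle here: all the serious input (the Langlands/Borel--Labesse--Schwermer decomposition along associate classes of parabolics, the characterization of cuspidal forms as negligible along all proper parabolics, and the definition of $\mathcal A_{Eis}(G)$ as the complement of $\mathcal A_{cusp}(G)$) has already been recalled in the preceding paragraphs. The only small point to be careful about is that the decomposition $\mathcal A(G)=\bigoplus_{\{P\}}\mathcal A_P(G)$ is a $(\g,K,G(\A_f))$-module decomposition, so that the induced decomposition on $H^q(G,E)$ is a decomposition as $G(\A_f)$-modules, as implicitly required by the statement.
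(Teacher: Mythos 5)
Your argument is correct and follows essentially the same route the paper takes: the paper records only the observation $\mathcal A_{G}(G)\subset V_G(\{G\})=L^2_{cusp}(G(\Q)\backslash G(\A))^\infty_{(K)}$ and declares the proposition to follow, and your proposal simply fills in the implicit details (the reverse inclusion via cuspidal forms being negligible along all proper parabolics, the matching of complements, and the compatibility of $(\g,K)$-cohomology with the direct sum decomposition as $G(\A_f)$-modules). No gap to report.
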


\subsection{Eisenstein series}\label{sec:eis}
We still want to take the second step in refining (\ref{eq:dec}),
meaning that we want to decompose the summands $H^q(\g,K,\mathcal A_{P}(G)\otimes E)$
involving cuspidal automorphic representations $\pi$ of $L_P(\A)$.
We refer the reader to \cite{schwfr} for details concerning this
section.\\\\ We need some technical assumptions: For $Q=LN=MAN$
associate to the standard parabolic $P$, $\varphi_{Q}$ is a finite
set of irreducible representations $\pi=\chi\widetilde\pi$ of
$L(\A)$, with $\chi: A(\R)^\circ\ra\C^*$ a continuous character and
$\widetilde\pi$ an irreducible, unitary subrepresentation of
$L^2_{cusp}(L(\Q)A(\R)^\circ\backslash L(\A))$ of $L(\A)$ whose
central character induces a continuous homomorphism
$A(\Q)A(\R)^\circ\backslash A(\A)\ra U(1)$ and whose infinitesimal
character matches the one of the dual of an irreducible
subrepresentation of $H^*(\n,E)$. This just means that
$\widetilde\pi$ is a unitary, cuspidal automorphic representation of
$L(\A)$ whose central and infinitesimal character satisfy the above
conditions. Finally, three further ``compatibility conditions'' have
to be satisfied between these sets $\varphi_{Q}$, skipped here and
listed in \cite{schwfr}, 1.2. The family of all collections
$\varphi=\{\varphi_{Q}\}$ of such finite sets is denoted $\Psi_{P}$.
\\\\Now denote $\textrm{I}_{Q,\wt\pi}=\textrm{Ind}_{Q(\A_f)}^{G(\A_f)}\textrm{Ind}_{(\l,K_L)}^{(\g,K)}
\left[\wt\pi^\infty_{(K_L)}\right]^{m(\wt\pi)}$ (unnormalized
induction). For a function $f\in \textrm{I}_{Q,\wt\pi}$,
$\Lambda\in\a_\C^*$ and $g\in G(\A)$ we consider the Eisenstein
series (formally) defined as

$$E_{Q}(f,\Lambda)(g):=\sum_{\gamma\in Q(\Q)\backslash G(\Q)}
f(\gamma g) e^{\<\Lambda+\rho_Q,H_Q(\gamma g)\>}.$$ If we set
$(\a^*)^+:=\{\Lambda\in\a_\C^*| \Re e(\Lambda)\in\rho_Q+C\}$, where
$C$ denotes the open, positive Weyl-chamber with respect to
$\Delta(Q,A)$, the series converges absolutely and uniformly on
compact subsets of $G(\A)\times (\a^*)^+$. It is known that for
fixed $\Lambda$ the function $E_{Q}(f,\Lambda)$ on $G(\A)$ is an
automorphic form there and that the map $\Lambda\mapsto
E_{Q}(f,\Lambda)(g)$ can be analytically continued to a meromorphic
function on all of $\a_\C^*$, cf. \cite{moewal} p. 140 or \cite{lang} \S7.
It is known that the singularities $\Lambda_0$ (i.e., poles) of
$E_{Q}(f,\Lambda)$ lie along certain affine hyperplanes of the form
$R_{\alpha, t}:=\{\xi\in\a_\C^*|\<\xi,\alpha\>=t\}$ for some
constant $t$ and some root $\alpha\in\Delta(Q,A)$, called
``root-hyperplanes'' (\cite{moewal} Prop. IV.1.11 (a) or \cite{lang}
p.131). Choose a normalized vector $\eta\in\a_\C^*$ orthogonal to
$R_{\alpha, t}$ and assume that $\Lambda_0$ lies on no other
singular hyperplane of $E_{Q}(f,\Lambda)$. Then define
$\Lambda_0(u):=\Lambda_0+u\eta$ for $u\in\C$. If $c$ is a positively
oriented circle in the complex plane around zero which is so small
that $E_{Q}(f,\Lambda_0(.))(g)$ has no singularities on the interior
of the circle with double radius, then
$$\textrm{Res}_{\Lambda_0}(E_{Q}(f,\Lambda)(g)):=\frac{1}{2\pi i}\int_c E_{Q}(f,\Lambda_0(u))(g)du$$
is a meromorphic function on $R_{\alpha, t}$, called the
\emph{residue} of $E_{Q}(f,\Lambda)$ at $\Lambda_0$. Its poles lie
on the intersections of $R_{\alpha, t}$ with the other singular
hyperplanes of $E_{Q}(f,\Lambda)$. By this procedure one gets a
function holomorphic at $\Lambda_0$ in finitely many steps by taking
successive residues as explained above. \\\\Now we are able to take
the desired second step in the decomposition of the $G(\A_f)$-module summand $H^q(\g,K,\mathcal A_{P}(G)\otimes E)$:
For $\pi=\chi\widetilde\pi\in\varphi_P\in\varphi\in\Psi_{P}$ let
$\mathcal A_{P,\varphi}(G)$ be the space of functions, spanned by
all possible residues and derivatives of Eisenstein series defined
via all $f\in \textrm{I}_{P,\wt\pi}$ at the value $d\chi$. It is a
$(\g,K,G(\A_f))$-module. Thanks to the functional equations (see
\cite{moewal} IV.1.10) satisfied by the Eisenstein series
considered, this is well defined, i.e., independent of the choice of
a representative for the class of $P$ (whence we took $P$ itself)
and the choice of a representation $\pi\in\varphi_P$. Finally, we
get

\begin{prop}[\cite{schwfr} Thm.s 1.4 \& 2.3; \cite{moewal} III, Thm. 2.6]\label{deceis}
There is a direct sum decomposition as $G(\A_f)$-module

\begin{equation}\label{eq:deceis}
H^q_{Eis}(G,E)=\bigoplus_{\{P\}, P\neq
G}\bigoplus_{\varphi\in\Psi_{P}}H^q(\g,K, \mathcal
A_{P,\varphi}(G)\otimes E).
\end{equation}
\end{prop}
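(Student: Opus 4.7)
The plan is to combine the two-tier decomposition of $\mathcal A(G)$ as a $(\g,K,G(\A_f))$-module with the fact that $(\g,K)$-cohomology commutes with arbitrary direct sums. The first tier, $\mathcal A(G)=\bigoplus_{\{P\}}\mathcal A_P(G)$, has already been recalled above from \cite{borlabschw}; restricting to the proper parabolics yields $\mathcal A_{Eis}(G)=\bigoplus_{\{P\},P\neq G}\mathcal A_P(G)$. The remaining work is therefore concentrated in the refinement of each summand $\mathcal A_P(G)$ according to the cuspidal support, namely the claim $\mathcal A_P(G)=\bigoplus_{\varphi\in\Psi_P}\mathcal A_{P,\varphi}(G)$.

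For this refinement I would invoke the full spectral–theoretic construction of Langlands, put in definitive form in \cite{moewal}: every element of $\mathcal A_P(G)$ can be written as a finite sum of residues and derivatives of Eisenstein series $E_Q(f,\Lambda)$ built from cuspidal data $(Q,\wt\pi,\chi)$ with $Q$ associate to $P$ and $\pi=\chi\wt\pi$ satisfying exactly the central and infinitesimal character conditions imposed in the definition of $\varphi_Q$. Grouping these summands according to which collection $\varphi=\{\varphi_Q\}_Q\in\Psi_P$ the underlying cuspidal data belong to exhibits $\mathcal A_P(G)$ as a sum $\sum_{\varphi}\mathcal A_{P,\varphi}(G)$. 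The fact that the same automorphic form is reached by several $Q\in\{P\}$ and several $\pi\in\varphi_Q$, related by the functional equations of \cite{moewal} IV.1.10, is exactly what the three compatibility conditions built into $\Psi_P$ (\cite{schwfr}, 1.2) absorb; this guarantees that each $\mathcal A_{P,\varphi}(G)$ is well-defined.

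The crux of the argument, and in my view the principal obstacle, is to show that this sum is \emph{direct}: Eisenstein contributions coming from genuinely inequivalent $\varphi$ must not cancel. This is the linear-independence statement underpinning Langlands' spectral decomposition (\cite{moewal} III, Thm. 2.6) and amounts to essential uniqueness of the cuspidal support of an automorphic form. Combined with the obvious $G(\A_f)$-equivariance of parabolic induction and of the residue/derivative operations, one obtains the sought decomposition of $\mathcal A_P(G)$ as a $(\g,K,G(\A_f))$-module.

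Once this is established, the conclusion is formal. Tensoring with $E$ preserves the decomposition, and since $(\g,K)$-cohomology commutes with direct sums of $(\g,K,G(\A_f))$-modules, one deduces
\[
H^q_{Eis}(G,E)=\bigoplus_{\{P\},P\neq G}\bigoplus_{\varphi\in\Psi_P}H^q(\g,K,\mathcal A_{P,\varphi}(G)\otimes E)
\]
as $G(\A_f)$-modules, the $G(\A_f)$-action being induced from the one on each summand $\mathcal A_{P,\varphi}(G)$. No ingredient beyond the two cited references \cite{schwfr} and \cite{moewal} enters the argument; the proof is essentially a careful book-keeping exercise assembling these results.
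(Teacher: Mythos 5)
The paper offers no proof of this proposition: it is quoted from \cite{schwfr} (Thm.s 1.4 and 2.3) and \cite{moewal} (III, Thm. 2.6), so the only comparison to be made is with the argument in those references. Your outline assembles exactly the ingredients they supply and is correct as a sketch, with one remark about the order of the argument: in \cite{schwfr} the decomposition $\mathcal A_P(G)=\bigoplus_{\varphi}\mathcal A_{P,\varphi}(G)$ (their Thm. 1.4) is obtained by defining the summands through the cuspidal support of constant terms, so that well-definedness and directness are built in from the start (essential uniqueness of the cuspidal support, \cite{moewal} III, Thm. 2.6, together with the compatibility conditions of \cite{schwfr} 1.2); the genuinely deep step is the subsequent identification (their Thm. 2.3, resting on Franke's results \cite{franke}) of each summand with the span of residues and derivatives of Eisenstein series evaluated at $d\chi$, which is precisely the spanning statement you take as your starting point. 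So where you begin with the Eisenstein description and then single out directness as the crux, the references begin with a decomposition that is direct by construction and then must prove the Eisenstein description; both routes lean on the same two deep inputs, and your final passage to cohomology (tensoring with $E$ and using that $(\g,K)$-cohomology commutes with direct sums, since $\Lambda^q(\g/\k)$ is finite dimensional) is correct and is exactly how the $G(\A_f)$-module statement (\ref{eq:deceis}) follows.
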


\begin{rem}
Notice, that the second statement entails the claim that the
$G(\A_f)$-module $H^q_{Eis}(G,E)$ is generated by
derivatives and residues of cuspidal Eisenstein series associated to
$\Lambda\in\overline{C}$.
\end{rem}

\subsection{}
How does this refined decomposition (\ref{eq:deceis}) help us in
determining the $G(\A_f)$-module $H^q_{Eis}(G,E)$? It allows us to
construct classes in the Eisenstein cohomology by lifting classes
associated to cuspidal automorphic representations $\wt\pi$. In
order to have this procedure readily available we will now recall
the notion of classes of type $(\pi,w)$, $\pi\in\varphi_P$, $w\in
W^P$. \\\\Therefore, let $\pi=\chi\widetilde\pi\in\varphi_P$ and
consider the symmetric tensor algebra
$$S_\chi(\a^*)=\bigoplus_{n\geq 0} \bigodot^n\a_\C^*,$$ $\bigodot^n\a_\C^*$
being the symmetric tensor product of $n$ copies of $\a^*_\C$, as
module under $\a_\C$: Via the natural identification
$\a_\C\ira\a_\C^*$ it is an $\a_\C$-module acted upon by
$\xi\in\a_\C\cong\a_\C^*$ via multiplication with
$\<\xi,\rho_P+d\chi\>+\xi$ (within the symmetric tensor algebra).
This explains the subscript ``$\chi$''. We extend this action
trivially on $\l_\C$ and $\n_\C$ to get an action of the Lie algebra
$\p_\C$ on the Banach space $S_\chi(\a^*)$. We may also define a $ P(\A_f)$-module structure via
the rule
$$q\cdot X=e^{\<d\chi+\rho_P,H_P(q)\>}X,$$
for $q\in P(\A_f)$ and $X\in S_\chi(\a^*)$. There is
a continuous linear isomorphism
$$\textrm{Ind}_{ P(\A_f)}^{ G(\A_f)}\textrm{Ind}_{(\l,K_L)}^{(\g,K)}
\left[\widetilde\pi^\infty_{(K_L)}\otimes
S_\chi(\a^*)\right]^{m(\widetilde\pi)}\ira
\textrm{I}_{P,\widetilde\pi}\otimes S_\chi(\a^*),$$ 
so in particular one can view the right hand side as a $(\g,K, G(\A_f))$-module by
transport of structure. Doing this, it is shown in \cite{franke}, pp. 256-257, that

\begin{equation}\label{dchi}
H^q(\g,K,\textrm{I}_{P,\wt\pi}\otimes S_\chi(\a^*)\otimes E)\cong
$$$$ \bigoplus_{\substack{w\in W^P\\ -w(\lambda+\rho)|_{\a_\C}=d\chi}}
\textrm{Ind}_{P(\A_f)}^{G(\A_f)}\left[H^{q-l(w)}(\m,K_M,(\widetilde\pi_\infty)_{(K_M)}\otimes
{}^\circ F_w)\otimes\C_{d\chi+\rho_P}\otimes
\widetilde\pi_f^{\infty_f}\right]^{m(\widetilde\pi)}.
\end{equation}
Here ${}^\circ F_w$ is the irreducible, finite dimensional representation of
$M(\C)$ with highest weight $\mu_w:=w(\lambda+\rho)-\rho|_{\b_\C}$
and $\C_{d\chi+\rho_P}$ the one-dimensional, complex
$P(\A_f)$-module on which $q\in P(\A_f)$ acts by multiplication by
$e^{\<d\chi+\rho_P,H_P(q)\>}$. A non-trivial class in a summand of
the right hand side is called a cohomology class \emph{of type}
$(\pi,w)$, $\pi\in\varphi_P$, $w\in W^P$ (this notion was first introduced in \cite{schwLNM} p. 56).\\
Further, since $L(\R)\cong M(\R)\times A(\R)^\circ$,
$\widetilde\pi_\infty$ can be viewed as an irreducible, unitary
representation of $M(\R)$. Therefore, a $(\pi,w)$ type consists of
an irreducible representation $\pi=\chi\widetilde\pi$ whose unitary
part $\widetilde\pi=\widetilde\pi_\infty\hat\otimes\widetilde\pi_f$
has at the infinite place an irreducible, unitary representation
$\widetilde\pi_\infty$ of the semisimple group $M(\R)$ with
non-trivial $(\m,K_M)$-cohomology with respect to ${}^\circ F_w$.

\subsection{The Eisenstein map}\label{eisintw}
In order to construct Eisenstein cohomology classes, we start from a
class of type $(\pi,w)$. By (\ref{dchi}) we can assume that
$d\chi=-w(\lambda+\rho)|_{\a_\C}$ and that this point lies inside
the closed, positive Weyl chamber defined by
$\Delta(P,A)$.\\\\Reinterpret $S_\chi(\a^*)$ as the Banach space of
formal, finite $\C$-linear combinations of differential operators
$\frac{\partial^\nu}{\partial\Lambda^\nu}$ on the complex,
$\ell$-dimensional vector space $\a^*_\C$. It is understood that some
choice of Cartesian coordinates $z_1(\Lambda),...,z_\ell(\Lambda)$ on
$\a^*_\C$ has been fixed and $\nu=(n_1,...,n_\ell)\in\N_0^{\ell}$ denotes
a multi-index with respect to these. As a consequence of
\cite{moewal} Prop. IV.1.11, there exists a polynomial $0\neq
q(\Lambda)$ on $\a^*_\C$ such that for every $f\in
\textrm{I}_{P,\wt\pi}$ the function
$$\Lambda\mapsto q(\Lambda)E_P(f,\Lambda)$$
is holomorphic at $d\chi$. Since $\mathcal A_{P,\varphi}(G)$ can be
written as the space which is generated by the coefficient functions
in the Taylor series expansion of $q(\Lambda)E_P(f,\Lambda)$ at
$d\chi$, $f$ running through $\textrm{I}_{P,\wt\pi}$, (cf.
\cite{schwfr}) we are able to define a surjective homomorphism of
$(\g,K,G(\A_f))$-modules $E_{P,\pi}$

\begin{displaymath}
\xymatrix{ \textrm{I}_{P,\wt\pi}\otimes
S_\chi(\a^*)\ar[rr]^{E_{P,\pi}} & & \mathcal A_{P,\varphi}(G)}
$$$$
f\otimes\frac{\partial^\nu}{\partial\Lambda^\nu}\mapsto
\frac{\partial^\nu}{\partial\Lambda^\nu}\left(q(\Lambda)E_P(f,\Lambda)\right)|_{d\chi}.
\end{displaymath}
and hence get a well-defined homomorphism in cohomology:

\begin{equation}\label{con}
H^q(\g,K,\textrm{I}_{P,\wt\pi}\otimes S_\chi(\a^*)\otimes
E)\stackrel{E^q_\pi}{\longrightarrow} H^*(\g,K,\mathcal
A_{P,\varphi}(G)\otimes E).
\end{equation}
There are the following results. The first one deals with the
regular (i.e. holomorphic) case:

\begin{thm}[\cite{schwLNM}, Thm. 4.11]\label{thm:holeis}
Suppose $[\beta]\in H^q(\g,K,\textrm{\emph{I}}_{P,\wt\pi}\otimes
S_\chi(\a^*)\otimes E)$ is a class of type $(\pi,w)$, represented by
a homomorphism $\beta$, such that for all elements
$f\otimes\frac{\partial^\nu}{\partial\Lambda^\nu}$ in its image,
$E_{P,\pi}(f\otimes\frac{\partial^\nu}{\partial\Lambda^\nu})=
\frac{\partial^\nu}{\partial\Lambda^\nu}\left(q(\Lambda)E_P(f,\Lambda)\right)|_{d\chi}$
is just the regular value $E_P(f,d\chi)$ of the Eisenstein series
$E_P(f,\Lambda)$, which is assumed to be holomorphic at the point
$d\chi=-w(\lambda+\rho)|_{\a_\C}$ inside the closed, positive Weyl
chamber defined by $\Delta(P,A)$. Then $E^q_\pi([\beta])$ is a
non-trivial Eisenstein cohomology class
$$E^q_\pi([\beta])\in H^q(\g,K,\mathcal A_{P,\varphi}(G)\otimes E).$$
\end{thm}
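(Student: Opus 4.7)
The plan is to establish non-triviality of $E^q_\pi([\beta])$ by exhibiting a partial left inverse to $E_{P,\pi}$ built from the constant term of the Eisenstein series along $P$, and then invoking functoriality of $(\g,K)$-cohomology. Since $E_{P,\pi}$ is a surjective $(\g,K,G(\A_f))$-homomorphism by construction, passing to cohomology yields the map $E^q_\pi$, so it suffices to show that the specific class $[\beta]$ survives this passage.

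Under the regularity assumption, the polynomial $q(\Lambda)$ may be chosen to equal $1$ in a neighborhood of $d\chi$, so the restriction of $E_{P,\pi}$ to the subspace $\textrm{I}_{P,\wt\pi}\otimes\bigodot^0\a^*_\C\subset\textrm{I}_{P,\wt\pi}\otimes S_\chi(\a^*)$ is simply $f\mapsto E_P(f,d\chi)$. By Langlands' constant term formula, the constant term of $E_P(f,\Lambda)$ along $P$ is a finite sum $\sum_{w'}e^{\<w'\Lambda+\rho_P,H_P(\cdot)\>}(M(w',\Lambda)f)(\cdot)$ over the relevant Weyl elements $w'$, with $w'=\textrm{id}$ contributing the distinguished summand $e^{\<\Lambda+\rho_P,H_P(\cdot)\>}f(\cdot)$. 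Because $d\chi=-w(\lambda+\rho)|_{\a_\C}$ lies inside the closed positive Weyl chamber for $\Delta(P,A)$, the characters $e^{\<w'd\chi+\rho_P,\cdot\>}$ are pairwise distinct on $A(\R)^\circ$. Extracting the $e^{\<d\chi+\rho_P,\cdot\>}$-eigenspace of the $A(\R)^\circ$-action on the constant term produces a $(\g,K,G(\A_f))$-homomorphism $r:\mathcal A_{P,\varphi}(G)\ra\textrm{I}_{P,\wt\pi}$ satisfying $r\circ E_{P,\pi}(\,\cdot\,\otimes 1)=\textrm{id}_{\textrm{I}_{P,\wt\pi}}$.

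Passing to $(\g,K)$-cohomology and composing shows that $r_*\circ E^q_\pi$ is the identity on the image of $H^q(\g,K,\textrm{I}_{P,\wt\pi}\otimes\bigodot^0\a^*_\C\otimes E)$ inside $H^q(\g,K,\textrm{I}_{P,\wt\pi}\otimes S_\chi(\a^*)\otimes E)$, which via the isomorphism (\ref{dchi}) contains the class $[\beta]$ of type $(\pi,w)$. Hence $E^q_\pi([\beta])\neq 0$. The principal technical obstacle is to verify that $r$ is well-defined on all of $\mathcal A_{P,\varphi}(G)$ and respects every module structure involved; this rests on the pairwise distinctness of the exponential characters $e^{\<w'd\chi+\rho_P,\cdot\>}$, supplied by the closed-Weyl-chamber condition, together with the cuspidality of $\wt\pi$, which prevents contamination of the distinguished summand by images of the intertwining operators $M(w',d\chi)f$ for $w'\neq\textrm{id}$.
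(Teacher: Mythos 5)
First, note that the paper does not prove this statement at all: it is quoted verbatim from Schwermer (\cite{schwLNM}, Satz 4.11), whose proof detects the class by restricting it to the face of the Borel--Serre boundary attached to $P$ (equivalently, by the constant term along $P$) and comparing inside the decomposition of the boundary cohomology into summands indexed by types $(\pi',w')$. Your proposal is in the same spirit -- use the constant term to recover the class -- but the way you justify the key separation step has a genuine gap. You claim the characters $e^{\langle w'd\chi+\rho_P,\cdot\rangle}$ are pairwise distinct \emph{because} $d\chi$ lies in the \emph{closed} positive Weyl chamber. That is false precisely on the walls, which the hypothesis of the theorem allows: for a self-associate maximal parabolic the nontrivial element $w_0$ of $W(A)$ sends $d\chi$ to $-d\chi$, so at $d\chi=0$ (and more generally whenever $w'd\chi=d\chi$ for some $w'\neq \mathrm{id}$) the term $M(w',d\chi)f$ carries the \emph{same} exponent as the leading term, your eigenspace extraction picks it up too, and $r\circ E_{P,\pi}(\cdot\otimes 1)=\mathrm{id}$ fails. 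Cuspidality of $\wt\pi$ does not rescue this: the terms $M(w',d\chi)f$ are again built from cusp forms on $L$, so they are not separated from the leading term by cuspidality. The separation that actually works (and is what Schwermer uses) is that the second term contributes to a summand of the boundary cohomology attached to a \emph{different} Kostant representative/type: since $\lambda+\rho$ is regular, $w'\!\cdot w(\lambda+\rho)\neq w(\lambda+\rho)$ for $w'\neq\mathrm{id}$, so no cancellation with the class of type $(\pi,w)$ can occur even when the $\a$-exponents coincide.

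A second, more technical gap: even in the interior case, the map $r$ you describe is not a $(\g,K,G(\A_f))$-homomorphism as defined. Constant terms of elements of $\mathcal A_{P,\varphi}(G)$ (which is spanned by residues \emph{and derivatives} of Eisenstein series) are exponential-polynomial functions of $H_P(\cdot)$, and the $\a$-action on such functions is only upper triangular with respect to polynomial degree: the pure-exponential part with exponent $d\chi+\rho_P$ is a submodule, not a direct summand, so projecting onto it is not a module map and does not induce $r_*$ on $(\g,K)$-cohomology without further argument (this is exactly where the filtration/decomposition arguments of Franke--Schwermer enter). So while your overall strategy is the right one, the two pillars it rests on -- distinctness of exponents from the closed-chamber condition, and the existence of the retraction $r$ as a module map -- both need to be replaced by the comparison in the Kostant-type decomposition of the boundary (or constant-term) cohomology, which is the substance of the cited Theorem 4.11.
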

Parts of the residual case are treated in \cite{grob22}. For sake of
simplicity we also assume that $P$ is self-associate. Put
$$\textrm{I}_{P,\widetilde\pi,\Lambda}:=\textrm{Ind}_{P(\A_f)}^{G(\A_f)}\textrm{Ind}_{(\l,K_L)}^{(\g,K)}
\left[\widetilde\pi^\infty_{(K_L)}\otimes\C_{\Lambda+\rho_P}\right]^{m(\widetilde\pi)}=\textrm{I}_{P,\widetilde\pi}\otimes\C_{\Lambda+\rho_P}$$
and recall the standard intertwining operators
$M(\Lambda,\wt\pi,v):\textrm{I}_{P,\widetilde\pi,\Lambda}\ra\textrm{I}_{P,v(\widetilde\pi),v(\Lambda)},$
see \cite{moewal}, II, associated to $\Lambda\in\a^*_\C$, $\wt\pi$
and certain Weyl group elements $v\in W(A):=N_{G(\Q)}(A(\Q))/L(\Q)$.
If $f\in \textrm{I}_{P,\wt\pi}$, we write
$f_\Lambda=fe^{\<\Lambda+\rho_P,H_P(.)\>}\in
\textrm{I}_{P,\widetilde\pi,\Lambda}$. If $M(\Lambda,\wt\pi,v)$ has a pole at $\Lambda=\Lambda_0$, then we assume to have normalized
it to a function $N(\Lambda,\wt\pi,v)$, which is holomorphic and
non-vanishing in a region containing $\Lambda_0$. Put
$$W(A)_{\textrm{res}}=\{v\in W(A)| M(\Lambda,\wt\pi,v) \textrm{ has a pole of order $\ell=\dim\a_\C$ at } \Lambda=d\chi\}.$$
This means that the order of the pole is maximal and implies that
the longest element $w_0$ of $W(A)$ (as a reduced word in the simple
reflections generating $W(A)$) will be inside $W(A)_{\textrm{res}}$.
We have the following

\begin{thm}[\cite{grob22}, Thm. 2.1]\label{thm:poles}
Let $[\beta]\in H^q(\g,K,\textrm{\emph{I}}_{P,\wt\pi}\otimes
S_\chi(\a^*)\otimes E)$ be a class of type $(\pi,w)$. If all
Eisenstein series $E_P(f,\Lambda)$, $f\otimes 1$ in the image of
$\beta$, have a pole of maximal possible order $\ell=\dim\a_\C$ at
$d\chi=-w(\lambda+\rho)|_{\a_\C}$ inside the closed, positive Weyl
chamber defined by $\Delta(P,A)$ and if
$\textrm{\emph{Im}}N(d\chi,\wt\pi,w_0)$ is a direct summand of $\sum_{v\in
W(A)_{\textrm{res}}}\textrm{\emph{Im}}N(d\chi,\wt\pi,v)$, then
$E^q_\pi([\beta])$ contributes at least in degree $q':=q+\dim
N(\R)-2l(w)$, $l(w)$ the length of $w$.
\end{thm}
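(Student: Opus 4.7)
The plan is to analyze the Eisenstein map \eqref{con} at the pole of maximal order, reinterpret the leading residue via the normalized intertwining operators, and then recompute $(\g,K)$-cohomology by applying the decomposition \eqref{dchi} to the residual module.

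First, I would unpack the maximal-pole hypothesis. For each $f\otimes 1$ in the image of $\beta$, the $\ell$-fold iterated residue of $E_P(f,\Lambda)$ at $d\chi$ is, by Langlands' computation of the constant term along $P$ (cf.\ \cite{moewal}), equal up to a nonzero scalar to $\sum_{v\in W(A)_{\textrm{res}}}N(d\chi,\wt\pi,v)(f_{d\chi})$. Consequently, the $(\g,K,G(\A_f))$-submodule of $\mathcal A_{P,\varphi}(G)$ spanned by these leading residues maps, via its constant term along $P$, onto $\sum_{v\in W(A)_{\textrm{res}}}\textrm{Im}\,N(d\chi,\wt\pi,v)$.

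Next I would use the direct-summand hypothesis to project onto the summand $\textrm{Im}\,N(d\chi,\wt\pi,w_0)\subset\textrm{I}_{P,w_0\wt\pi,w_0(d\chi)}$. Since $N(d\chi,\wt\pi,w_0)$ is holomorphic and nonvanishing at $d\chi$, this projection yields a nonzero $(\g,K,G(\A_f))$-quotient of the residual module whenever $[\beta]\ne 0$, which is the locus where the constructed Eisenstein cohomology class will live.

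To pin down the degree I would apply \eqref{dchi} to $\textrm{I}_{P,w_0\wt\pi,w_0(d\chi)}$. Its $(\g,K)$-cohomology is indexed by Kostant representatives $w'\in W^P$ satisfying $-w'(\lambda+\rho)|_{\a_\C}=w_0(d\chi)$; the unique such $w'$ realized by the intertwiner starting from $w$ is the ``dual'' element $w^\vee\in W^P$, which by the standard Kostant duality satisfies $l(w^\vee)=\dim N(\R)-l(w)$. Since $N(d\chi,\wt\pi,w_0)$ is an isomorphism onto its image, it preserves the $(\m,K_M)$-cohomological degree $q-l(w)$ appearing in \eqref{dchi}, and hence
\[
q' \;=\; l(w^\vee)+(q-l(w)) \;=\; q+\dim N(\R)-2l(w),
\]
which is the degree asserted in the theorem.

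The main obstacle will be making rigorous the identification of the residual submodule of $\mathcal A_{P,\varphi}(G)$ generated by the leading residues with $\sum_{v\in W(A)_{\textrm{res}}}\textrm{Im}\,N(d\chi,\wt\pi,v)$ as $(\g,K,G(\A_f))$-modules, and checking that the Delorme-type isomorphism on cohomology indeed intertwines the class of type $(\pi,w)$ with a class of type $(w_0\pi,w^\vee)$ on the residual side (so that the length shift from $l(w)$ to $l(w^\vee)$ is actually picked up). Both are carried out in \cite{grob22}, Thm.\ 2.1, whose argument follows the pattern sketched above.
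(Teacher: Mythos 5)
Note first that the paper itself contains no proof of this statement: Theorem \ref{thm:poles} is imported verbatim from \cite{grob22}, Thm.~2.1, so there is no internal argument to compare yours against. Your outline does follow the route by which such results are proved (leading residues, constant term along $P$, normalized operators, projection onto $\textrm{Im}\,N(d\chi,\wt\pi,w_0)$ via the direct-summand hypothesis, and the bookkeeping $q'=(q-l(w))+l(w^\vee)$ with $l(w^\vee)=\dim N(\R)-l(w)$), and that bookkeeping is indeed where the formula $q'=q+\dim N(\R)-2l(w)$ comes from.

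The genuine gap is at the decisive step, the non-vanishing in degree $q'$. The module $\textrm{Im}\,N(d\chi,\wt\pi,w_0)$ is in general a proper quotient of $\textrm{I}_{P,\wt\pi,d\chi}$ and a proper submodule of $\textrm{I}_{P,w_0(\wt\pi),w_0(d\chi)}$ (at the archimedean place it is a Langlands-quotient--type subquotient), and the $(\g,K)$-cohomology of a subquotient is not computed by applying the Delorme/Franke decomposition (\ref{dchi}) to the ambient induced module: the maps induced in relative Lie algebra cohomology by inclusions or by intertwining operators are neither injective nor surjective in general. Your justification ``since $N(d\chi,\wt\pi,w_0)$ is an isomorphism onto its image, it preserves the $(\m,K_M)$-cohomological degree $q-l(w)$'' is not an argument: the operator has a large kernel, and the induced map on cohomology could kill precisely the class of type $(\pi,w)$; likewise, showing that the residual module is a nonzero quotient does not show that it has nonzero cohomology in degree $q'$, nor that this cohomology maps nontrivially into $H^{q'}(\g,K,\mathcal A_{P,\varphi}(G)\otimes E)$ --- the latter is exactly the issue the Rohlfs--Speh theorem quoted after Theorem \ref{thm:poles} needs additional hypotheses (temperedness of $\wt\pi_\infty$, $d\chi$ in the open chamber, non-regular $E$) to address. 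Two further points need care: the element $w^\vee$ is not ``the unique'' Kostant representative with $-w^\vee(\lambda+\rho)|_{\a_\C}=w_0(d\chi)$ (several representatives can restrict to the same point of $\a^*_\C$, compare Propositions \ref{1/2} and \ref{1}), and the matching of coefficient systems, i.e.\ that $w_0(\wt\pi)_\infty\otimes{}^\circ F_{w^\vee}$ has nonzero $(\m,K_M)$-cohomology in the same degree $q-l(w)$ as $\wt\pi_\infty\otimes{}^\circ F_{w}$, requires the precise compatibility of the Kostant duality on $W^P$ with the twist by $w_0$, which you only assert. Finally, deferring these points to \cite{grob22}, Thm.~2.1 is circular in this context, since that citation is the very statement to be proved.
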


\begin{rem}
Theorem \ref{thm:poles} can always be applied to non-holomorphic Eisenstein
series coming from self-associate {\it maximal} parabolic subgroups
$P$, since then $W(A)$ has exactly one non-trivial element. We
recall further that Eisenstein series associated to
non-self-associate maximal parabolic subgroups are always
holomorphic in the region $Re(\Lambda)\geq 0$. See also
\cite{moewal}.
\end{rem}
Theorem \ref{thm:poles} is complemented by the following result

\begin{thm}[\cite{rospres}, Thm. III. 1]
Let $\sigma$ be a residual, cohomological (with respect to a
non-regular coefficient module $E$) representation of $G(\A)$ which
equals (via the constant term map) the image
$\textrm{\emph{Im}}N(d\chi,\wt\pi,w_0)$ at $\Lambda=d\chi$ of the
normalization of an intertwining operator $M(\Lambda,\wt\pi,w_0)$
which has a pole of maximal order at $\Lambda=d\chi$. Suppose
further that $d\chi$ is inside the open, positive Weyl chamber
defined by $\Delta(P,A)$ and that $\wt\pi_\infty$ is a tempered
representation of $L(\R)$. If $r$ is the lowest degree in which
$\sigma$ has non-trivial $(\g,K)$-cohomology, then the image of
$H^r(\g,K,\sigma\otimes E)$ in $H^r_{Eis}(G,E)$ is non-trivial and
consists of residual Eisenstein cohomology classes.
\end{thm}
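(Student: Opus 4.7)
Plan of proof. The strategy is to realise $\sigma$ as a topological direct summand of $\mathcal A_{P,\varphi}(G)$ via the residue construction, to pin down the lowest degree $r$ via the temperedness hypothesis and Vogan--Zuckerman, and finally to push a harmonic $(\g,K)$-cohomology class forward via the inclusion. My plan splits naturally into three steps.

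First I would establish the direct-summand property. Since $M(\Lambda,\wt\pi,w_0)$ has a pole of maximal order $\ell=\dim\a_\C$ at $\Lambda=d\chi$ and $w_0$ is the longest element of $W(A)$, the functional equations of \cite{moewal} IV.1.10 force every possible singularity of $E_P(f,\Lambda)$ at that point to be accounted for by this one intertwining operator. The iterated residue procedure recalled in Section~\ref{sec:eis} therefore yields a well-defined surjective $(\g,K,G(\A_f))$-homomorphism $\textrm{I}_{P,\wt\pi,d\chi}\twoheadrightarrow\sigma\subset L^2_{res}(G(\Q)\backslash G(\A))^\infty_{(K)}$. The assumption that $d\chi$ lies in the \emph{open} positive Weyl chamber ensures, via Langlands' square integrability criterion, that this residue is genuinely square integrable, so that $\sigma$ embeds isometrically into $L^2_{res}$. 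Combining this with the finite multiplicity of $\sigma$ in the residual spectrum (\cite{gelf}) and with the fact that $\sigma$ coincides with its own image under the constant-term map identifies it as a $(\g,K,G(\A_f))$-direct summand of $\mathcal A_{P,\varphi}(G)$.

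Next I would analyse the cohomology at infinity. Because $\wt\pi_\infty$ is tempered on $L(\R)\cong M(\R)\times A(\R)^\circ$ and $d\chi\in C$ lies strictly in the positive chamber, $\sigma_\infty$ is precisely the irreducible Langlands quotient $J_P(\wt\pi_\infty,d\chi)$ of $G(\R)$. Vogan--Zuckerman then identifies it with a derived functor module $A_\q(\mu)$ attached to some $\theta$-stable parabolic $\q\subset\g_\C$; the non-regularity of the coefficient module $E$ is precisely what allows this non-tempered representation to carry $(\g,K)$-cohomology relative to $E$. From this classification one reads off the lowest degree $r$ in which $H^*(\g,K,\sigma_\infty\otimes E)\neq 0$, and by comparison with the isomorphism \eqref{dchi} one matches this datum with the existence of a Kostant representative $w\in W^P$ satisfying $-w(\lambda+\rho)|_{\a_\C}=d\chi$, so that a class of type $(\pi,w)$ in degree $r$ exists.

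Finally I would deduce non-vanishing in $H^r_{Eis}(G,E)$. By Borel--Garland, on the square-integrable part $(\g,K)$-cohomology with coefficients in $E$ is computed by harmonic forms, so a non-zero class $[\omega]\in H^r(\g,K,\sigma\otimes E)$ admits a harmonic representative. The direct-summand property from Step~1 makes the map $H^r(\g,K,\sigma\otimes E)\hookrightarrow H^r(\g,K,\mathcal A_{P,\varphi}(G)\otimes E)\subset H^r_{Eis}(G,E)$ injective (a retraction is furnished by projection along the orthogonal complement of $\sigma$ in the residual spectrum). I expect the \emph{hard part} to be Step~1: a clean verification that at a maximal-order pole the iterated residue really yields the full direct summand $\sigma$ and not merely a non-zero subquotient. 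This is where one must carefully combine Langlands' functional equations, the Mœglin--Waldspurger description of residual automorphic forms as iterated residues of cuspidal Eisenstein series, and the fact that $d\chi$ being in the \emph{open} chamber rules out extraneous intertwining poles that could otherwise obstruct the summand decomposition.
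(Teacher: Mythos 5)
You should note first that the paper contains no proof of this statement: it is imported verbatim as an external result (\cite{rospres}, Thm.~III.1), so the only thing to assess is whether your sketch would actually prove it. It would not, because Steps~1 and~3 assume away exactly the hard content of the theorem. The residue map does give a surjection of $\textrm{I}_{P,\wt\pi,d\chi}$ onto $\sigma$, and square-integrability of the residue follows from Langlands' criterion since $d\chi$ lies in the open chamber; but it does not follow that $\sigma$ is a \emph{direct summand} of $\mathcal A_{P,\varphi}(G)$ as a $(\g,K,G(\A_f))$-module. The space $\mathcal A_{P,\varphi}(G)$ is spanned by residues \emph{and} derivatives/main values of Eisenstein series; it is not unitary, not semisimple, and carries no inner product, so the ``retraction furnished by projection along the orthogonal complement of $\sigma$ in the residual spectrum'' is not available: orthogonality lives in $L^2_{res}$, not in $\mathcal A_{P,\varphi}(G)$. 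Even granting that the residues span a $(\g,K,G(\A_f))$-submodule, inclusion of a submodule does not give injectivity on $(\g,K)$-cohomology --- a harmonic cocycle for $\sigma$ could become a coboundary using the non-square-integrable forms in $\mathcal A_{P,\varphi}(G)$, and ruling this out is precisely what the theorem asserts.

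This is also visible from the hypotheses: your argument never genuinely uses the temperedness of $\wt\pi_\infty$, the non-regularity of $E$, or the restriction to the \emph{lowest} degree $r$, yet without these the conclusion is not expected to hold. In the actual proof of Rohlfs--Speh (and in the related residual-class results such as \cite{grob22}, Thm.~2.1, quoted as Theorem~\ref{thm:poles} here) these hypotheses enter essentially: one controls the cohomology of the quotient of $\mathcal A_{P,\varphi}(G)$ by the residual part (equivalently, of the full induced module at $\Lambda=d\chi$ versus its Langlands quotient) in degree $r$, via the structure of $\textrm{I}_{P,\wt\pi,d\chi}$ with tempered $\wt\pi_\infty$ and $d\chi$ strictly dominant, and/or pairs residual classes against pseudo-Eisenstein classes to detect non-vanishing. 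Your Step~2 (identifying $\sigma_\infty$ as the Langlands quotient, hence an $A_\q(\mu)$-module, and reading off $r$ together with the Kostant representative matching \eqref{dchi}) is fine, but to close the argument you must replace the direct-summand/retraction claim by an argument of this kind; as written, Step~1 begs the question.
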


\section{Odd orthogonal groups and their maximal parabolic $\Q$-subgroups}
\subsection{}\label{basics} The main objective of this paper is to calculate Eisenstein
cohomology of the odd orthogonal group, thus pursuing the above
ideas for this case. We will focus on the subspaces
$H^q(\g,K,\mathcal A_{P,\varphi}(G)\otimes E)$ coming from a {\it
maximal} parabolic $\Q$-subgroup $P$.
%The other cases shall be part of a future work.
In this section we provide the algebraic group data for the latter. %on $SO_{2n+1}$ and their maximal parabolic $\Q$-subgroups.
\\\\
From now on $G$ denotes the split odd orthogonal group $SO_{2n+1}$
over $\Q$ of $\Q$-rank $n\geq 3$. The algebra of diagonal matrices
$\h_\C$ in $\g_\C$ determines a Cartan subalgebra as before and we
denote by $\Delta^\circ=\{\alpha_1,...,\alpha_n\}$ the set of simple
roots of $\g_\C$. If we write $\varepsilon_i$ for the linear
functional extracting the $i$-th entry of $\h_\C$, the set
$\Delta^\circ$ is given by $\alpha_i=\ep_i-\ep_{i+1}$ for $i<n$ and
$\alpha_n=\ep_n$. Now, the simple root $\alpha_k$ determines the
unique crossed Dynkin diagram
$$\begin{array}{c}
\begin{picture}(166,0)
\put(73,-2){\makebox(0,0){{\scriptsize $\alpha_k$}}}
\end{picture}\\
\begin{picture}(166,14)
\put(4,3){\line(1,0){17}} \put(25,3){\line(1,0){6}}
\put(51,3){\line(-1,0){6}} \put(54,3){\line(1,0){17}}
\put(74,3){\line(1,0){17}} \put(95,3){\line(1,0){6}}
\put(121,3){\line(-1,0){6}} \put(124,3){\line(1,0){17}}
\put(39,3){\makebox(0,0){\dots}} \put(109,3){\makebox(0,0){\dots}}
\put(144,1,2){\line(1,0){18}} \put(144,5){\line(1,0){18}}
\put(153,3){\makebox(0,0){$>$}} \put(3,2.7){\makebox(0,0){$\circ$}}
\put(23,2.7){\makebox(0,0){$\circ$}}
\put(53,2.7){\makebox(0,0){$\circ$}}
\put(73,3){\makebox(0,0){$\times$}}
\put(93,2.7){\makebox(0,0){$\circ$}}
\put(123,2.7){\makebox(0,0){$\circ$}}
\put(143,2.7){\makebox(0,0){$\circ$}}
\put(163,2.7){\makebox(0,0){$\circ$}}
\end{picture}\\
\begin{picture}(166,14)
\put(37,3){\makebox(0,0){$A_{k-1}$}}
\put(121,3){\makebox(0,0){$B_{n-k}$}}
\end{picture}
\end{array}$$
with the $k$-th node replaced by a cross, which in turn corresponds
to the unique standard maximal parabolic $\Q$-subgroup $P_k\subset
G$, $P_k=L_kN_k=M_kA_kN_k$ with Levi factor $L_k\cong GL_k\times
SO_{2l+1}$, $l=n-k$. The correspondence is by means of the
requirement that $\alpha_k$ is the only simple root that does not
vanish identically on $(\a_k)_\C$. Furthermore, since $\dim N_k =
|\Delta^+|-|\Delta^+_{M_k}|$, we see that $\dim
N_k=k(2n-k)-\binom{k}{2}$. It is easy to check that associate
classes and conjugacy classes of maximal parabolic $\Q$-subgroups
$P_k\subset G$ coincide in this case, hence
all $n$ maximal parabolic subgroups are self-associate.\\\\
There is a canonical isomorphism
$\mathfrak{h}^*\cong\mathfrak{a}^*_k\oplus\mathfrak{b}^*_k$, which
allows to restrict weights on $\mathfrak{h}_{\C}$ in a canonical way
to its direct summands. If $\beta=\sum_{i=1}^n\beta_i\ep_i$, we see
by the very definition of $\a_k$ and $\b_k$ that
$\beta|_{(\a_k)_\C}=\frac{1}{k}\sum_{i=1}^k\left(\sum_{i=1}^k\beta_i\right)\ep_i$
and hence
$\beta|_{(\b_k)_\C}=\sum_{i=1}^n\beta_i\ep_i-\frac{1}{k}\sum_{i=1}^k\left(\sum_{i=1}^k\beta_i\right)\ep_i$.
Moreover, we get
$\rho_{P_k}=\frac{k(2n-k)}{2}\alpha_k|_{(\a_k)_\C}$.

\subsection{Kostant data}
Let us now turn to the Kostant representatives $w\in W^{P_k}$. In
particular, we shall calculate the evaluation points
$\Lambda_w:=-w(\lambda+\rho)|_{(\a_k)_\C}$ of Eisenstein series
$E_{P_k}(f,\Lambda)$. Rereading Lemma $4.3$ in \cite{tadic} in view
of this latter calculation we prefer to identify the elements $w\in
W^{P_k}$ in the form set forth by the following

\begin{prop}
For each $k$, $1\leq k\leq n$, the Kostant representatives $W^{P_k}$
are parameterized by the set $\mathcal S_k$ of all ordered pairs
$(I,J)$ of disjoint subsets $I$, $J$ of $\N_{\leq n}=\{1,2,...,n\}$
satisfying $|I|+|J|=k$. A parametrization is given as follows: Let
$\i=|I|$ and $\j=|J|$, so we can write $I=\{i_1,...,i_\i\}$,
$J=\{j_1,...,j_\j\}$ and $R:=\N_{\leq n}\backslash I\cup
J=\{r_1,...,r_{n-k}\}$. Then the element $\wij\in W^{P_k}$
corresponding to the pair $(I,J)$ is given by\\
$w_{(I,J)}(\ep_{i_l}):=-\ep_{k+1-l}$ for $i_l\in I$,\\
$w_{(I,J)}(\ep_{j_l}):=\ep_{l}$ for $j_l\in J$ and\\
$w_{(I,J)}(\ep_{r_l}):=\ep_{k+l}$ for $r_l\in R$.
\end{prop}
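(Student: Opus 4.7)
The plan is to exploit the description of the Weyl group $W=W(\son,\h_\C)$ of type $B_n$ as the group of signed permutations of $\{\ep_1,\ldots,\ep_n\}$. Writing each $w\in W$ in the form $w^{-1}(\ep_i)=s_i\,\ep_{\tau(i)}$ with a sign vector $s\in\{\pm 1\}^n$ and a permutation $\tau\in S_n$, and using that the simple roots of the Levi of $P_k$ are $\Delta^\circ\setminus\{\alpha_k\}$, the defining condition $w^{-1}(\alpha_i)>0$ for all $i\neq k$ becomes a set of combinatorial constraints on $(s,\tau)$, which I would read off directly in order to recover the $(I,J)$-parametrization.

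First I would process the short simple root $\alpha_n=\ep_n$ (a constraint whenever $k\neq n$): $w^{-1}(\ep_n)>0$ forces $s_n=+1$. For a long simple root $\alpha_i=\ep_i-\ep_{i+1}$ with $i\neq k$, a four-way case distinction on $(s_i,s_{i+1})$ yields the local rules: (a) the pattern $s_i=-1$, $s_{i+1}=+1$ is forbidden; (b) if $s_i=s_{i+1}=+1$ then $\tau(i)<\tau(i+1)$; (c) if $s_i=s_{i+1}=-1$ then $\tau(i)>\tau(i+1)$; (d) the pattern $s_i=+1$, $s_{i+1}=-1$ imposes no further constraint. Combining (a) with $s_n=+1$ forces $s_i=+1$ throughout the block $\{k+1,\ldots,n\}$, while inside $\{1,\ldots,k\}$ the signs must take the shape
\[
\underbrace{+,\ldots,+}_{\j\text{ entries}}\,,\,\underbrace{-,\ldots,-}_{\i\text{ entries}}
\]
for some decomposition $\i+\j=k$.

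Next I would extract $\tau$ from rules (b)--(c): it is strictly increasing on $\{k+1,\ldots,n\}$ and on $\{1,\ldots,\j\}$, and strictly decreasing on $\{\j+1,\ldots,k\}$. Setting $J:=\tau(\{1,\ldots,\j\})$, $I:=\tau(\{\j+1,\ldots,k\})$, and $R:=\tau(\{k+1,\ldots,n\})$ gives an ordered disjoint triple partitioning $\N_{\leq n}$ with $|I|+|J|=k$, i.e.\ an element $(I,J)\in\mathcal S_k$. Listing the three sets in natural order as in the statement, monotonicity determines $\tau$ completely by $\tau(l)=j_l$, $\tau(k+1-l)=i_l$, and $\tau(k+l)=r_l$; inverting the relation $w^{-1}(\ep_i)=s_i\ep_{\tau(i)}$ then yields exactly the three formulas claimed for $\wij$.

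Bijectivity then comes down to checking that, conversely, every $(I,J)\in\mathcal S_k$ inserted into the formulas of the proposition produces a signed permutation whose $(s,\tau)$-data satisfies (a)--(d) together with $s_n=+1$ (automatic when $k<n$, since $\tau(n)=r_{n-k}\in R$; vacuous when $k=n$). A cardinality count closes the argument:
\[
|\mathcal S_k|=2^k\binom{n}{k}=\frac{2^n\,n!}{k!\,2^{n-k}(n-k)!}=\frac{|W|}{|W_{L_k}|}=|W^{P_k}|.
\]
The main bookkeeping hurdle is the reverse indexing $i_l\leftrightarrow k+1-l$ forced by rule (c); this is the origin of the minus sign in $\wij(\ep_{i_l})=-\ep_{k+1-l}$ and is the one place where careful index-tracking is indispensable.
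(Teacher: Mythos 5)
Your proposal is correct, and the bookkeeping (the reversal $i_l\leftrightarrow k+1-l$, the sign pattern, the treatment of $\alpha_n$ only when $k<n$, and the absence of any constraint at the excluded node $i=k$) all checks out. The computational core is the same as in the paper --- both arguments come down to evaluating $w^{-1}$ on the simple roots $\alpha_i$, $i\neq k$, of the Levi and reading off sign and ordering conditions on a signed permutation --- but you run it in the opposite direction. The paper takes the formulas for $\wij$ as given, verifies by a short computation of $\wij^{-1}(\alpha_l)$ that each $\wij$ lies in $W^{P_k}$, and then obtains the bijection from the count $|W^{P_k}|=2^k\binom{n}{k}=|\mathcal S_k|$ (so surjectivity is outsourced entirely to counting). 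You instead classify $W^{P_k}$ from scratch: your rules (a)--(d) together with $s_n=+1$ force the signs to be $+$ on $\{1,\dots,\j\}\cup\{k+1,\dots,n\}$ and $-$ on $\{\j+1,\dots,k\}$, with $\tau$ increasing on the plus blocks and decreasing on the minus block, which \emph{derives} the $(I,J)$-parametrization rather than merely verifying it; the converse check that each $\wij$ satisfies the constraints is then immediate. This buys an explanation of where the formulas (in particular the minus sign and the index reversal) come from, and it renders the closing cardinality count logically redundant: once every $w\in W^{P_k}$ is shown to have the stated form, every candidate $\wij$ lies in $W^{P_k}$, and distinct pairs visibly give distinct elements, the bijection is already established, whereas the paper genuinely needs the count (together with injectivity of $(I,J)\mapsto\wij$, left implicit there) because it only proves one inclusion. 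Your count $|\mathcal S_k|=|W|/|W_{L_k}|$ is also fine, though it quietly uses the standard fact that $W^{P_k}$ is a full set of minimal coset representatives --- the same fact underlying the paper's opening equality.
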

\begin{proof}
First of all we notice that $|W^{P_k}|=2^k\binom{n}{k}=|\mathcal
S_k|$. So we only need to show that $\wij\in W^{P_k}$. But since

$$\wij^{-1}(\alpha_l)=\left\{
\begin{array}{ll}
\ep_{j_l}-\ep_{j_{l+1}} & 1\leq l\leq\j-1\\
\ep_{j_\j}-\ep_{i_\i} & l=\j\\
\ep_{i_{k-l}}-\ep_{i_{k-l+1}} & \j+1\leq l\leq k-1\\
\ep_{r_{l-k}}-\ep_{r_{l-k+1}} & k+1\leq l\leq n-1\\
\ep_{r_{n-k}} & l=n\\
\end{array}
\right.$$ $\wij\in W^{P_k}$ by the very definition of $W^{P_k}$.
\end{proof}

\begin{rem}
The description of $w\in W^{P_k}$ as in the Proposition is seen to
amount to the one given in \cite{tadic} by observing that
$X^k_j=\{\wij|\j=j\}$ (in the notation of \cite{tadic}), where the
$k$ here corresponds to the $i$ there.
\end{rem}
Writing $\wij$ as a word in the simple reflections, the next Lemma
is immediate.

\begin{lem}\label{length}
Let $m:=\max(\{l:j_l<i \quad\forall i\in I\}\cup\{0\})$. Then the
length of $\wij$ is
$$l(\wij)=\sum_{l=1}^\i(2n-k-i_l+1)+\sum_{l=1}^\j(j_l-l)-\sum_{l=m+1}^\j|\{i\in I:i<j_l\}|.$$
\end{lem}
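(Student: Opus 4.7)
The plan is to compute the length via the standard formula $l(w)=\#\{\alpha\in\Delta^+:w\alpha\in-\Delta^+\}$, partitioning $\Delta^+(\g_\C,\h_\C)$ into the short roots $\ep_i$ ($1\leq i\leq n$) and the long positive roots $\ep_a\pm\ep_b$ ($1\leq a<b\leq n$), and then applying the explicit description of $\wij$ on the basis $\{\ep_i\}$ from the previous proposition. The short roots contribute $\i$ to the length, since $\wij(\ep_i)$ acquires a minus sign precisely when $i\in I$.

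For the long roots, I would proceed via a nine-way case analysis, indexing pairs $(a,b)$ with $a<b$ by the sets $X,Y\in\{I,J,R\}$ to which $a$ and $b$ respectively belong. In each case the signs of $\wij(\ep_a),\wij(\ep_b)$ are read off from the formulas, and the condition for $\wij(\ep_a\pm\ep_b)$ to be a negative root reduces to a comparison between the permutation-values $f(a),f(b)$. The crucial simplification occurs in the mixed cases $a\in I,b\in J$ and $a\in J,b\in I$: the apparently delicate comparison reduces to the inequality $l_a+l_b>k+1$ (with $l_a,l_b$ denoting positions within $I$ and $J$), and this can \emph{never} hold, because $l_a+l_b\leq\i+\j=k$. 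Consequently all contributions become purely combinatorial counts $N_{XY}:=|\{(a,b):a<b,\ a\in X,\ b\in Y\}|$, and after tallying the nine cases (with weights $1,1,2,1,1$ for the surviving types $(I,I),(I,J),(I,R),(R,I),(R,J)$) one finds
\[
l(\wij)=\i+\binom{\i}{2}+N_{IJ}+2N_{IR}+N_{RI}+N_{RJ}.
\]

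What remains is an arithmetic rearrangement to match the claimed expression. Combining $\i+\binom{\i}{2}+N_{IJ}+2N_{IR}+N_{RI}$ via the trivial identity $N_{IR}+N_{RI}=\i(n-k)$ together with the elementary decomposition $n-i_l=(\i-l)+|J\cap(i_l,n]|+|R\cap(i_l,n]|$ (summed over $l$) yields $\sum_{l=1}^\i(2n-k-i_l+1)$. Dually, $j_l-l$ equals the number of indices strictly less than $j_l$ that lie in $I\cup R$, so that $\sum_{l=1}^\j(j_l-l)=N_{IJ}+N_{RJ}$ and hence $N_{RJ}=\sum_{l=1}^\j(j_l-l)-N_{IJ}$. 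Finally, by the very definition of $m$ the summands in $\sum_{l=1}^\j|\{i\in I:i<j_l\}|$ corresponding to $l\leq m$ vanish, so this sum equals $\sum_{l=m+1}^\j|\{i\in I:i<j_l\}|=N_{IJ}$; substituting everything recovers the formula stated in the lemma.

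The main obstacle is purely bookkeeping: there is no conceptual difficulty, only the need to keep straight the nine sub-cases and to recognise that the $m$-dependent truncation in the correction term is simply a cosmetic way of writing $N_{IJ}$.
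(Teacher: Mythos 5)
Your proof is correct, and every step checks out: with $N_{XY}$ denoting the number of pairs $a<b$, $a\in X$, $b\in Y$, the inversion count $l(w)=\#\{\alpha\in\Delta^+:w(\alpha)<0\}$ applied to the signed permutation $\wij$ does give $\i$ from the short roots and weights $1,1,2,1,1$ for the pair types $(I,I),(I,J),(I,R),(R,I),(R,J)$ (the key point that the mixed comparisons never trigger the extra inversion because the positions satisfy $l_a+l_b\le\i+\j=k<k+1$ is exactly right), so $l(\wij)=\i+\binom{\i}{2}+N_{IJ}+2N_{IR}+N_{RI}+N_{RJ}$; and your rearrangement identities $\sum_{l=1}^\i(2n-k-i_l+1)=\i+\binom{\i}{2}+N_{IJ}+2N_{IR}+N_{RI}$ and $\sum_{l=1}^\j(j_l-l)=N_{IJ}+N_{RJ}$ indeed convert this into the stated formula, the truncation at $m$ being nothing but the observation that the terms with $l\le m$ vanish, so the correction term equals $N_{IJ}$ (note this last point silently uses that $j_1<\dots<j_\j$ are listed in increasing order, so that $\{l:j_l<\min I\}$ is the initial segment $\{1,\dots,m\}$ — harmless, since that ordering is the convention underlying the whole parametrization). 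The route is different from the paper's only in the sense that the paper gives no computation at all: it declares the formula ``immediate'' from writing $\wij$ as a word in the simple reflections, i.e.\ from exhibiting a reduced expression, whereas you evaluate the length as the cardinality of the inversion set. Both are standard ways of computing lengths in the Weyl group of type $B_n$; your version actually supplies the missing verification, avoids having to check minimality of an explicit word, and makes the somewhat opaque $m$-dependent term in the statement transparent as a rewriting of $N_{IJ}$.
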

As announced in the beginning of this section, we want to determine
the evaluation points $\Lambda_w=-w(\lambda+\rho)|_{(\a_k)_\C}$. In
what follows, we will write $\lambda=\sum_{i=1}^n\lambda_i\ep_i$.
Using our parametrization of the Kostant representatives a straight
forward computation shows

\begin{prop}\label{sws}
$$-\wij(\lambda+\rho)|_{(\a_k)_\C}=\left(\sum_{l=1}^\i(\lambda_{i_l}-i_l)-\sum_{l=1}^\j(\lambda_{j_l}-j_l)+(\i-\j)(n+\frac{1}{2})\right)\alpha_k|_{(\a_k)_\C}$$
\end{prop}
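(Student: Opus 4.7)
The plan is to carry out a direct linear-algebra computation using the explicit description of $\wij$ from the preceding Proposition. First I would write $\rho = \sum_{i=1}^n (n-i+\tfrac12)\ep_i$, the standard formula for the half-sum of positive roots of type $B_n$, so that the $i$-th coefficient of $\lambda+\rho$ is $\lambda_i+n-i+\tfrac12$.

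Next, I would apply $\wij$ term by term using the three formulas $\wij(\ep_{i_l})=-\ep_{k+1-l}$, $\wij(\ep_{j_l})=\ep_l$, and $\wij(\ep_{r_l})=\ep_{k+l}$, which decomposes $\wij(\lambda+\rho)$ into three blocks indexed by $I$, $J$ and $R$ with coefficients read off from the formula for $\lambda+\rho$ above.

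To restrict to $(\a_k)_\C$, I would use the rule recorded in Section \ref{basics}: for $1\leq m\leq k$ one has $\ep_m|_{(\a_k)_\C}=\alpha_k|_{(\a_k)_\C}$, whereas $\ep_m|_{(\a_k)_\C}=0$ for $m>k$. The key observation is that the indices $k+1-l$ (for $l=1,\dots,\i$) and $l$ (for $l=1,\dots,\j$) all lie in $\{1,\dots,k\}$, while $k+l$ (for $l=1,\dots,n-k$) lies in $\{k+1,\dots,n\}$. Thus the $R$-block vanishes upon restriction, and the remaining two blocks collapse to a scalar multiple of $\alpha_k|_{(\a_k)_\C}$ whose coefficient is $-\sum_{l=1}^\i(\lambda_{i_l}+n-i_l+\tfrac12)+\sum_{l=1}^\j(\lambda_{j_l}+n-j_l+\tfrac12)$.

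Finally, I would separate the constant terms $n-i_l+\tfrac12$ and $n-j_l+\tfrac12$ from the $\lambda$-dependent parts, use $|I|=\i$ and $|J|=\j$ to produce the factor $(\i-\j)(n+\tfrac12)$, and negate to obtain the scalar in the statement. The calculation is pure bookkeeping; the only potential pitfall is tracking signs correctly (the minus sign from $\wij(\ep_{i_l})=-\ep_{k+1-l}$ combines with the outer minus sign in $-\wij(\lambda+\rho)|_{(\a_k)_\C}$) and verifying that the image indices fall in $\{1,\dots,k\}$ precisely as claimed.
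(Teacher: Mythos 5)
Your computation is correct and is exactly the ``straightforward computation'' the paper invokes without writing out: apply the explicit formulas for $\wij$ to $\lambda+\rho=\sum_i(\lambda_i+n-i+\tfrac12)\ep_i$ and restrict via $\ep_m|_{(\a_k)_\C}=\alpha_k|_{(\a_k)_\C}$ for $m\leq k$ and $0$ for $m>k$, which agrees with the restriction rule in Section \ref{basics}. The sign bookkeeping and the index ranges $k+1-l\in\{\j+1,\dots,k\}$, $l\in\{1,\dots,\j\}$, $k+l>k$ are all as you state, so nothing is missing.
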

Let us write $t_{(I,J)}$ for the above coefficient of $\alpha_k$ in
$-\wij(\lambda+\rho)|_{(\a_k)_\C}$. Then $t_{(I,J)}$ is always a
half-integer. Now we compute the highest weights
$\mu_{\wij}=\wij(\lambda+\rho)-\rho|_{(\b_k)_\C}$ of the irreducible
$M(\C)$-modules ${}^\circ F_{\wij}$ by subtracting
$\wij(\lambda+\rho)-\rho|_{(\a_k)_\C}=-\left(\frac{1}{k}t_{(I,J)}+n-\frac{k}{2}\right)\sum_{l=1}^k\ep_l$
from $\wij(\lambda+\rho)-\rho$, cf. section \ref{basics}.

\begin{prop}\label{muws}
We have
\begin{eqnarray*}
\mu_{\wij} & = & \sum_{l=1}^\j(\lambda_{j_l}-j_l+l+\frac{1}{k}t_{(I,J)}+n-\frac{k}{2})\ep_l\\
& & -\sum_{l=1}^\i(\lambda_{i_{\i-l+1}}-i_{\i-l+1}-\j-l+n+1-\frac{1}{k}t_{(I,J)}+\frac{k}{2})\ep_{\j+l}\\
& & + \sum_{l=1}^{n-k}(\lambda_{r_l}-r_l+k+l)\ep_{k+l}
\end{eqnarray*}
\end{prop}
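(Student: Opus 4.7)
The plan is to compute $w_{(I,J)}(\lambda+\rho)-\rho$ directly from the action of $w_{(I,J)}$ on the basis $\{\ep_i\}$ spelled out in the proposition preceding Lemma \ref{length}, and then subtract its restriction to $(\a_k)_\C^*$, which has already been identified in the discussion of $t_{(I,J)}$ just before the statement.

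First I would write $\lambda+\rho = \sum_{i=1}^n (\lambda_i+n+\tfrac12-i)\ep_i$ and split the sum according to whether $i\in I$, $i\in J$ or $i\in R$. Applying the three rules for $w_{(I,J)}$ term by term gives
$$w_{(I,J)}(\lambda+\rho) = -\sum_{l=1}^{\i}(\lambda_{i_l}+n+\tfrac12-i_l)\ep_{k+1-l} + \sum_{l=1}^{\j}(\lambda_{j_l}+n+\tfrac12-j_l)\ep_l + \sum_{l=1}^{n-k}(\lambda_{r_l}+n+\tfrac12-r_l)\ep_{k+l}.$$
Subtracting $\rho=\sum_{m=1}^n(n+\tfrac12-m)\ep_m$ collapses each coefficient: on $\ep_l$ (for $1\leq l\leq\j$) one obtains $\lambda_{j_l}-j_l+l$; on $\ep_{k+l}$ (for $1\leq l\leq n-k$) one obtains $\lambda_{r_l}-r_l+k+l$. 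For the indices $m=k+1-l$ with $1\leq l\leq\i$, using the identity $k=\i+\j$ one finds $m\in\{\j+1,\ldots,\j+\i\}$, and the substitution $l':=\i-l+1$ rewrites the $I$-block as a sum over $\ep_{\j+l'}$ with coefficient $-(\lambda_{i_{\i-l'+1}}-i_{\i-l'+1}+2n+1-\j-l')$.

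To land in $(\b_k)_\C^*$, one subtracts the $(\a_k)_\C^*$-restriction of $w_{(I,J)}(\lambda+\rho)-\rho$. Combining Proposition \ref{sws} with the formulas for $\rho|_{(\a_k)_\C}$ and $\alpha_k|_{(\a_k)_\C}$ from section \ref{basics}, this restriction equals $-\bigl(\tfrac1k t_{(I,J)}+n-\tfrac k2\bigr)\sum_{l=1}^k\ep_l$, exactly as recorded in the paragraph preceding the proposition. Adding the constant $\tfrac1k t_{(I,J)}+n-\tfrac k2$ to the coefficient of $\ep_l$ for each $1\leq l\leq k$ and leaving the $\ep_{k+l}$-coefficients untouched matches the three displayed expressions.

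The computation is essentially bookkeeping; the only delicate point is keeping the two reindexings straight — the reversal $l\mapsto\i-l+1$ forced by $\ep_{i_l}\mapsto-\ep_{k+1-l}$, together with the identification $k+1-l=\j+l'$ coming from $|I|+|J|=k$ — without which the subtraction of the $(\a_k)_\C^*$-component cannot be carried out uniformly across the $I$- and $J$-blocks.
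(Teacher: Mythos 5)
Your proposal is correct and takes exactly the route the paper prescribes: the paper states Proposition \ref{muws} without a written-out proof, merely instructing the reader to compute $\wij(\lambda+\rho)-\rho$ and subtract its $(\a_k)_\C$-restriction $-\bigl(\tfrac1k t_{(I,J)}+n-\tfrac k2\bigr)\sum_{l=1}^k\ep_l$, which is precisely what you carry out. Your bookkeeping (the coefficients $\lambda_{j_l}-j_l+l$ and $\lambda_{r_l}-r_l+k+l$, the reversal $l\mapsto\i-l+1$ with $k+1-l=\j+l'$, and the uniform shift on the first $k$ coordinates) reproduces the stated formula exactly.
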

Next, we recall that we may assume that ${}^\circ F_w$ is isomorphic
to its contragredient representation ${}^\circ\check{F}_w$. This is
due to \cite{bocas}, where it is proved that the existence of a
square-integrable automorphic representation of $L_k(\A)$ which is
cohomological with respect to ${}^\circ F_w$ implies that ${}^\circ
F_w$ is self-dual. In particular, if ${}^\circ F_w$ is not
self-dual, then there is no cuspidal automorphic representation
$\wt\pi$ which has non-trivial cohomology when twisted by ${}^\circ
F_w$. \\\\The finite-dimensional representation ${}^\circ F_w$ being
self-dual is equivalent to

\begin{equation}\label{selfdual}
-w_{L_k}(\mu_w)=\mu_w,
\end{equation}
where we wrote $w_{L_k}$ for the longest element of the Weyl group
of $L_k(\C)$. By prop. \ref{muws}, we see that

\begin{eqnarray*}
-w_{L_k}(\mu_{\wij}) & = & -\sum_{l=1}^\j(\lambda_{j_l}-j_l+l+\frac{1}{k}t_{(I,J)}+n-\frac{k}{2})\ep_{k-l+1}\\
& & + \sum_{l=1}^\i(\lambda_{i_{\i-l+1}}-i_{\i-l+1}-\j-l+n+1-\frac{1}{k}t_{(I,J)}+\frac{k}{2})\ep_{k-\j-l+1}\\
& & + \sum_{l=1}^{n-k}(\lambda_{r_l}-r_l+k+l)\ep_{k+l}
\end{eqnarray*}
Assume that $\i<\j$ and (\ref{selfdual}) holds. Then comparing the
coefficient of $\ep_{\i+1}$ in $-w_{L_k}(\mu_w)$ to the coefficient
of $\ep_{\i+1}$ in $\mu_w$ leads to the equality

\begin{equation}\label{eq:hm}
\lambda_{j_{\i+1}}+\lambda_{j_{\j}}+\frac{2}{k}t_{(I,J)}+2n+1=j_{\i+1}+j_{\j}.
\end{equation}
As remarked at the end of section \ref{sec:eis}, we may assume by
the work of J. Franke that $t_{(I,J)}\geq 0$. But then the left hand
side in (\ref{eq:hm}) is greater or equal to $2n+1$, while the right
hand side is at most $2n$. This is a contradiction. So we may assume
from now on that $\i\geq\j$. \\\\In order to make the determination
of poles of Eisenstein series as simple and efficient as possible,
we shall try to find restrictions on the range of evaluation points.
In this sense, the following Proposition will be crucial for us.

\begin{prop}\label{nows}
There is no $w=\wij\in W^{P_k}$ satisfying
$-w_{L_k}(\mu_{w})=\mu_{w}$ and $0\leq t_{(I,J)}<\frac{k}{2}$.
\end{prop}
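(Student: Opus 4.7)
The plan is to turn the self-duality condition $-w_{L_k}(\mu_w) = \mu_w$ into explicit numerical equations on the coordinates of $\mu_{\wij}$, and then contradict the assumption $t_{(I,J)} < \tfrac{k}{2}$ using dominance of $\lambda$. First note that, since $L_k \cong GL_k \times SO_{2(n-k)+1}$, the longest element $w_{L_k}$ acts as the reversal $\ep_l \mapsto \ep_{k-l+1}$ on $\ep_1,\ldots,\ep_k$ and as $\ep_l \mapsto -\ep_l$ on $\ep_{k+1},\ldots,\ep_n$. Writing $\mu_{\wij}=\sum_l c_l\ep_l$, the relation $-w_{L_k}(\mu_{\wij})=\mu_{\wij}$ is automatic on the $SO$-indices and is equivalent to the system $c_l + c_{k-l+1}=0$ for $l=1,\ldots,k$.

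Next, I would exploit these relations in the range $1\leq l\leq \j$ (assuming $\j\geq 1$). For such $l$ one has $k-l+1\geq \i+1$, so $c_l$ is picked up from the first sum of Proposition \ref{muws} while $c_{k-l+1}$ comes from the second sum under the substitution $l'=\i-l+1$. A direct computation yields
\[ c_l = \lambda_{j_l} - j_l + l + \tfrac{1}{k}t_{(I,J)} + n - \tfrac{k}{2}, \qquad c_{k-l+1} = -\lambda_{i_l} + i_l - l + k - n + \tfrac{1}{k}t_{(I,J)} - \tfrac{k}{2}, \]
and the self-duality equation $c_l+c_{k-l+1}=0$ simplifies to the clean identity
\[ \lambda_{i_l} - \lambda_{j_l} = (i_l - j_l) + \tfrac{2}{k}t_{(I,J)} \qquad (1\leq l\leq \j). \]

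Now I would do a sign analysis using $\lambda_1\geq\lambda_2\geq\cdots\geq\lambda_n\geq 0$ and $I\cap J=\emptyset$. The case $i_l=j_l$ is excluded by disjointness. If $i_l>j_l$, dominance forces the left-hand side to be $\leq 0$ while the right-hand side is $\geq 1+0 >0$, a contradiction. Hence $i_l<j_l$, and the equation then forces $\tfrac{2}{k}t_{(I,J)}\geq j_l-i_l\geq 1$, i.e.\ $t_{(I,J)}\geq \tfrac{k}{2}$, contradicting $t_{(I,J)}<\tfrac{k}{2}$.

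Finally I would dispose of the edge case $\j=0$, where the above system is empty. Here $\i=k$, and Proposition \ref{sws} directly gives $t_{(I,J)}=\sum_{l=1}^{k}(\lambda_{i_l}-i_l)+k(n+\tfrac{1}{2})$, which together with $\lambda_{i_l}\geq 0$ and $i_l\leq n$ yields $t_{(I,J)}\geq -kn+k(n+\tfrac{1}{2})=\tfrac{k}{2}$, again ruling out $t_{(I,J)}<\tfrac{k}{2}$. The only delicate step is the index-chasing that identifies $c_{k-l+1}$ for $l\leq \j$ as the second-sum term with $l'=\i-l+1$; once that bookkeeping is set up correctly, the rest is immediate from dominance.
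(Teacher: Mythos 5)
Your proposal is correct and follows essentially the same route as the paper's proof: it converts self-duality into the coordinate relations $c_l+c_{k-l+1}=0$, extracts $(\lambda_{i_l}-\lambda_{j_l})-(i_l-j_l)=\tfrac{2}{k}t_{(I,J)}$, derives the contradiction from dominance of $\lambda$ together with $0\leq t_{(I,J)}<\tfrac{k}{2}$, and settles $\j=0$ via Proposition \ref{sws}, which is exactly the paper's argument (carried out there at $l=1$). The only caveat is that your index bookkeeping (placing $c_{k-l+1}$ in the second sum of Proposition \ref{muws} for all $l\leq\j$) tacitly uses $\i\geq\j$; this is harmless since that inequality is established in the text immediately preceding the proposition from self-duality and $t_{(I,J)}\geq 0$, and the paper's own comparison at $\ep_1$ relies on it in the same way.
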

\begin{proof}
First assume $\j> 0$. Suppose that we found a $w=\wij\in W^{P_k}$
satisfying $-w_{L_k}(\mu_{w})=\mu_{w}$ and $0\leq
t_{(I,J)}<\frac{k}{2}$. Then equation (\ref{selfdual}) implies that
the coefficient of $\ep_{1}$ in $-w_{L_k}(\mu_w)$ and the
coefficient of $\ep_{1}$ in $\mu_w$ must be equal and since $\j>0$,
this reads as
$$(\lambda_{i_1}-\lambda_{j_1})-(i_1-j_1)=\frac{2}{k}t_{(I,J)},$$
implying
$$0\leq (\lambda_{i_1}-\lambda_{j_1})-(i_1-j_1)<1.$$
If $j_1>i_1$ then $\lambda_{i_1}\geq\lambda_{j_1}$, leading to
$(\lambda_{i_1}-\lambda_{j_1})-(i_1-j_1)\geq 1$. But if $j_1<i_1$
then $\lambda_{i_1}\leq\lambda_{j_1}$ and this yields
$(\lambda_{i_1}-\lambda_{j_1})-(i_1-j_1)\leq -1$. A contradiction.
\\Now assume $\j=0$. Then $\i=k$ and by prop. \ref{muws} we see that
$$t_{(I,J)}=\sum_{l=1}^k(\lambda_{i_l}-i_l)+k(n+\frac{1}{2})\geq\frac{k}{2}.$$ This proves the claim.
\end{proof}
We shall also see now that there are only very few Kostant
representatives $w=\wij$ giving rise to the lowest possible,
positive point $\Lambda_w=\frac{k}{2}\alpha|_{(\a_k)_\C}.$

\begin{prop}\label{1/2}
Suppose $-w_{L_k}(\mu_{\wij})=\mu_{\wij}$ and $t_{(I,J)}=\frac{k}{2}$. Then, depending on the parity of $k$, \\
$I=\{i_1,...,i_\i=n\}$, $J=\{i_1+1,...,i_{\i-1}+1\}$,
$\lambda_{i_l}=\lambda_{i_l+1}$, $1\leq l\leq \i-1$ and
$\lambda_n=0$ if $k$ is odd,\\ $I=\{i_1,...,i_{\frac{k}{2}}\}$,
$J=\{i_1+1,...,i_{\frac{k}{2}}+1\}$,
$\lambda_{i_l}=\lambda_{i_l+1}$, $1\leq l\leq \frac{k}{2}$ if $k$ is
even. In particular the length of such an $\wij$ is unique and given
by
$$l(\wij)=\left\{
\begin{array}{ll}
\frac{k-1}{2}(2n-\frac{3(k-1)}{2})+(n-k+1) & \textrm{ if $k$ is odd}\\
\frac{k}{2}(2n-\frac{3k}{2}+1) & \textrm{ if $k$ is even}.\\
\end{array}
\right.$$
\end{prop}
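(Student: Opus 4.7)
The plan is to exploit the self-duality condition $-w_{L_k}(\mu_{\wij}) = \mu_{\wij}$ coordinate by coordinate, using Proposition \ref{muws} for the explicit shape of $\mu_{\wij}$. The longest Weyl element $w_{L_k}$ of $L_k = GL_k \times SO_{2(n-k)+1}$ sends $\ep_l \mapsto \ep_{k-l+1}$ for $l \leq k$ and $\ep_l \mapsto -\ep_l$ for $l > k$, so the $\ep_{k+l}$-coefficients of $\mu_{\wij}$ and $-w_{L_k}(\mu_{\wij})$ agree automatically, and all information is carried by the $\ep_l$ with $1 \leq l \leq k$. These split naturally into the first block $1 \leq l \leq \j$ and the middle block $\j+1 \leq l \leq k$ of Proposition \ref{muws}.

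First I would match the $\ep_l$-coefficients for $1 \leq l \leq \j$ (assuming $\j > 0$; the edge case $\j = 0$ reduces to $k=1$ using the direct formula for $t_{(I,J)}$ already appearing at the end of the proof of Proposition \ref{nows}). After simplification this produces
$$(\lambda_{i_l} - \lambda_{j_l}) - (i_l - j_l) = \tfrac{2}{k}\, t_{(I,J)}$$
for every such $l$, generalising the $l = 1$ identity already exploited in Proposition \ref{nows}. Substituting $t_{(I,J)} = k/2$ makes the right-hand side equal $1$, and the dominance of $\lambda$ combined with the disjointness of $I, J$, exactly as in the proof of Proposition \ref{nows}, forces $j_l = i_l + 1$ and $\lambda_{i_l} = \lambda_{i_l+1}$ for every $l = 1, \ldots, \j$.

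Next I would match the coefficients of $\ep_{\j+m}$ for $1 \leq m \leq \i - \j$, i.e.\ the range in which both coupled indices $\i - m + 1$ and $\j + m$ lie above $\j$. Direct computation, using $\i + \j = k$ and $t_{(I,J)} = k/2$, reduces the resulting condition to
$$\lambda_{i_{\i-m+1}} + \lambda_{i_{\j+m}} = (i_{\i-m+1} + i_{\j+m}) - 2n.$$
The right-hand side is $\leq 0$ (since each $i_\cdot \leq n$) while the left-hand side is $\geq 0$, so both sides vanish. Thus $i_{\i-m+1} = i_{\j+m} = n$, and distinctness of the elements of $I$ forces $\i - m + 1 = \j + m$, i.e.\ $m = (\i-\j+1)/2$, together with $\lambda_n = 0$. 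For this to be compatible with the whole range $1 \leq m \leq \i - \j$ one needs either an empty range ($k$ even, $\i = \j = k/2$) or precisely the single admissible value $m = 1$ ($k$ odd, $\i = (k+1)/2$, $\j = (k-1)/2$, $i_\i = n$, $\lambda_n = 0$). This yields exactly the structural dichotomy of the Proposition.

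The length formula then drops out by substituting this description into Lemma \ref{length}: since $j_l = i_l + 1 > i_l \in I$ the auxiliary parameter $m$ of that lemma vanishes, and $|\{i \in I : i < j_l\}| = l$. Replacing $j_l$ by $i_l + 1$ throughout, the $\sum_l i_l$-contributions cancel in the $k$-even case (where $\i = \j$) and reduce to $n$ in the $k$-odd case (since then $i_\i = n$), so the dependence on the particular choice of $i_1, \ldots, i_{\i-1}$ drops out and one recovers the two stated closed expressions. The principal obstacle I anticipate is the bookkeeping in the second step, namely keeping track of which block of $\mu_{\wij}$ (the $a$-, middle $b$-, or tail-block of Proposition \ref{muws}) gets sent by $-w_{L_k}$ into the coefficient of a given $\ep_{\j+m}$, so that self-duality translates into precisely the right elementary inequality.
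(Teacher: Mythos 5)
Your proposal is correct, and its first half is exactly the paper's argument: comparing the $\ep_l$-coefficients for $1\leq l\leq\j$ gives $(\lambda_{i_l}-\lambda_{j_l})-(i_l-j_l)=\tfrac{2}{k}t_{(I,J)}=1$, and dominance plus disjointness force $j_l=i_l+1$, $\lambda_{i_l}=\lambda_{i_l+1}$. You diverge in the second half: the paper substitutes these conclusions into the formula for $t_{(I,J)}$ from Proposition \ref{sws} and works with the single aggregate identity $\sum_{l=\j+1}^{\i}i_l-n(\i-\j)=\sum_{l=\j+1}^{\i}\lambda_{i_l}\geq 0$, whose left-hand side is $\leq 0$ with equality precisely when $\i=\j+1$ and $i_\i=n$ (whence $\lambda_n=0$); you instead keep using self-duality coordinatewise on the middle block $\ep_{\j+m}$, $1\leq m\leq\i-\j$, of Proposition \ref{muws}. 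I checked your bookkeeping: in that range the $\ep_{\j+m}$-coefficient of $-w_{L_k}(\mu_{\wij})$ comes only from the $I$-block (index $\i-\j-m+1$), and with $\i+\j=k$, $t_{(I,J)}=\tfrac{k}{2}$ the matching does reduce to $\lambda_{i_{\i-m+1}}+\lambda_{i_{\j+m}}=i_{\i-m+1}+i_{\j+m}-2n$, after which your sign argument and the distinctness of the elements of $I$ give the same dichotomy. Your variant is slightly more local: it does not even need the step-one conclusions to run, and it pins down $i_\i=n$ and $\lambda_n=0$ index by index, whereas the paper's aggregate identity is marginally shorter; your treatment of the edge case $\j=0$ via the formula at the end of the proof of Proposition \ref{nows} (forcing $k=1$, $I=\{n\}$, $\lambda_n=0$) and your evaluation of Lemma \ref{length} (the auxiliary maximum vanishes since $j_l=i_l+1>i_l$, and $|\{i\in I:i<j_l\}|=l$ by disjointness, so the $\sum_l i_l$-terms cancel and the length is independent of the choice of the $i_l$) correctly reproduce both stated length formulas.
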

\begin{proof}
Recall $\i\geq\j$. Comparing the coefficients of $\ep_l$, $1\leq
l\leq\j$, in $-w_{L_k}(\mu_{\wij})$ and in $\mu_{\wij}$ gives us as
in the proof of Proposition \ref{nows}
$$(\lambda_{i_l}-\lambda_{j_l})-(i_l-j_l)=\frac{2}{k}t_{(I,J)}=1.$$
Therefore $j_l=i_l+1$ and $\lambda_{i_l}=\lambda_{i_l+1}$ for $1\leq
l\leq \j$. This shows the claim, if $\i=\j$ (which forces $k$ to be
even). So assume that $\i>\j$. Inserting $j_l=i_l+1$ and
$\lambda_{i_l}=\lambda_{i_l+1}$ into the formula for $t_{(I,J)}$
given in Proposition \ref{sws}, yields
$$\sum_{l=\j+1}^\i i_l-n(\i-\j)=\sum_{l=\j+1}^\i\lambda_{i_l}\geq 0.$$
But the left hand side of this equation is less or equal to $0$,
with equality if and only if $\i=\j+1$ and $i_\i=n$.\\The formula
for the length of $\wij$ is now a direct consequence of Lemma
\ref{length}. Hence the claim.
\end{proof}
Assume for the rest of this section that $k<n$ and that $k$ is even.
As we will see in section \ref{k<n}, we need to know which
$w=\wij\in W^{P_k}$ give rise to $\Lambda_w=k\alpha|_{(\a_k)_\C}$
for such $k$. There are more possibilities than in the case of
$\frac{k}{2}$ and we classify them in the next Proposition. We omit
the technical proof, as it is completely analogous to the proof of
Proposition \ref{1/2}

\begin{prop}\label{1}
Suppose $-w_{L_k}(\mu_{\wij})=\mu_{\wij}$ and $t_{(I,J)}=k<n$ is even. Then, one of the following holds, \\

\begin{enumerate}
\item[(i)] $I=\{i_1,...,i_{\i-1}=n-1,i_\i=n\}$, $J=\{i_1+1,...,i_{\i-2}+1\}$, $\lambda_{i_l}=\lambda_{i_l+1}+1$, $1\leq l\leq \i-2$, $\lambda_{n-1}=\lambda_n=0$ and
$$l(\wij)=k(n-\frac{3k}{4}+\frac{1}{2})+1,$$\item[(ii)]
$I=\{i_1,...,i_{\i-1}=n-1,i_\i=n\}$, $J=\{i_1+2,...,i_{\i-2}+2\}$,
$\lambda_{i_l}=\lambda_{i_l+1}$, $1\leq l\leq \i-2$ and
$$k(n-\frac{3k}{4}+1)-\lfloor\frac{k-2}{4}\rfloor\leq l(\wij)\leq
k(n-\frac{3k}{4}+1),$$
\item[(iii)] $I=\{i_1,...,i_{\frac{k}{2}}\}$, $J=\{i_1+1,...,i_{\frac{k}{2}}+1\}$, $\lambda_{i_l}=\lambda_{i_l+1}+1$, $1\leq l\leq \frac{k}{2}$ and
$$l(\wij)=k(n-\frac{3k}{4}+\frac{1}{2}),$$\item[(iv)]
$I=\{i_1,...,i_{\frac{k}{2}}\}$,
$J=\{i_1+2,...,i_{\frac{k}{2}}+2\}$,
$\lambda_{i_l}=\lambda_{i_l+1}$, $1\leq l\leq \frac{k}{2}$ and
$$k(n-\frac{3k}{4}+1)-\lfloor\frac{k}{4}\rfloor\leq l(\wij)\leq
k(n-\frac{3k}{4}+1),$$
\end{enumerate}
In any case,

\begin{equation}\label{ineq1}
k(n-\frac{3k}{4}+\frac{1}{2})\leq l(\wij)\leq k(n-\frac{3k}{4}+1)
\end{equation}
\end{prop}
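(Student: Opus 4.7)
The plan is to follow the template of the proof of Proposition \ref{1/2} essentially line by line, but with the value $t_{(I,J)}=\frac{k}{2}$ replaced by $t_{(I,J)}=k$. As a first step, I would compare coefficients of $\ep_l$ for $1\leq l\leq \j$ in the identity $-w_{L_k}(\mu_{\wij})=\mu_{\wij}$ (using Proposition \ref{muws}), which—self-duality on the $SO_{2l+1}$-factor being automatic—yields the matched-pair equation
$$(\lambda_{i_l}-\lambda_{j_l})-(i_l-j_l)=\tfrac{2}{k}t_{(I,J)}=2,\qquad 1\leq l\leq \j.$$
Dominance of $\lambda$ excludes $j_l<i_l$ exactly as in the proof of Proposition \ref{nows}, and the only integer solutions compatible with $\lambda_{i_l}\geq\lambda_{j_l}\geq 0$ are $(j_l-i_l,\lambda_{i_l}-\lambda_{j_l})\in\{(1,1),(2,0)\}$. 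This step is purely formal and parallels the corresponding one in Proposition \ref{1/2}.

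Next, I would control the ``unmatched tail" $l\in\{\j+1,\dots,\i\}$ by inserting the matched conditions into the formula for $t_{(I,J)}$ given by Proposition \ref{sws}. Because $\i+\j=k$ is even, the difference $\i-\j$ is even; after simplification the condition $t_{(I,J)}=k$ reduces to
$$\sum_{l=\j+1}^{\i}\lambda_{i_l}=\sum_{l=\j+1}^{\i} i_l-(\i-\j)\bigl(n-\tfrac{1}{2}\bigr).$$
Since $\lambda_{i_l}\geq 0$ and the $i_l$'s are distinct integers bounded by $n$, the right-hand side is easily seen to be nonpositive and to vanish precisely when $\i-\j\leq 2$; moreover $\i-\j=2$ forces $i_{\i-1}=n-1$, $i_\i=n$ and $\lambda_{n-1}=\lambda_n=0$. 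The case $\i-\j\geq 4$ is therefore ruled out, exactly as the case $\i-\j\geq 2$ was ruled out in Proposition \ref{1/2}.

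Combining the previous two steps, the admissible $(I,J)$ split naturally into the four configurations listed as (i)--(iv) according to the two choices $\i-\j\in\{0,2\}$ and $j_l\in\{i_l+1,i_l+2\}$ on the matched indices (with the obvious compatibility between consecutive pairs implied by the strict monotonicity of $I$ and $J$ and by $I\cap J=\emptyset$). For each configuration I would then insert the description of $(I,J)$ into Lemma \ref{length}. In the two ``pure" cases (i) and (iii) the value of $m$ and of the correction sum $\sum_{l=m+1}^\j|\{i\in I:i<j_l\}|$ are fixed, and the telescoping with $\sum_{l}(2n-k-i_l+1)+\sum_l(j_l-l)$ produces the closed forms $k(n-\tfrac{3k}{4}+\tfrac{1}{2})+1$ and $k(n-\tfrac{3k}{4}+\tfrac{1}{2})$ respectively.

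The main obstacle, and the reason cases (ii) and (iv) only yield a range of lengths, will be the bookkeeping in Lemma \ref{length} when $j_l=i_l+2$: now different admissible spacings of the $i_l$ change the number of ``inversions'' $|\{i\in I:i<j_l\}|$, producing a deficit bounded by $\lfloor\tfrac{k-2}{4}\rfloor$ respectively $\lfloor\tfrac{k}{4}\rfloor$ relative to the extremal configuration $i_l=i_{l-1}+3$. I would carry out the estimate by comparing the actual length to the length obtained from the densest packing of $I\cup J$, the latter giving the common upper bound $k(n-\tfrac{3k}{4}+1)$, and then read off the joint bound \eqref{ineq1} by taking the minimum and maximum across (i)--(iv).
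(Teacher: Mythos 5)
Your first two steps are correct and are exactly the intended analogue of the proof of Proposition \ref{1/2}: comparing coefficients of $\ep_l$, $1\le l\le\j$, gives $(\lambda_{i_l}-\lambda_{j_l})+(j_l-i_l)=2$, hence $(j_l-i_l,\lambda_{i_l}-\lambda_{j_l})\in\{(1,1),(2,0)\}$, and feeding this into Proposition \ref{sws} rules out $\i-\j\ge 4$ and forces, when $\i-\j=2$, the tail $i_{\i-1}=n-1$, $i_\i=n$, $\lambda_{n-1}=\lambda_n=0$. The genuine gap is your third step: the claim that strict monotonicity of $I,J$ and $I\cap J=\emptyset$ force the choice $j_l\in\{i_l+1,i_l+2\}$ to be \emph{uniform} in $l$, so that only the four configurations (i)--(iv) occur. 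That is not true: the two types can be mixed from one matched index to the next. For example take $n=6$, $k=4$, $\lambda=(2,1,1,1,1,0)$, $I=\{1,3\}$, $J=\{2,5\}$; then $\mu_{\wij}$ has $GL_4$-part $(5,3,-3,-5)$ and $SO_5$-part $(2,0)$, so $-w_{L_k}(\mu_{\wij})=\mu_{\wij}$, and $t_{(I,J)}=4=k<n$ is even, yet $(I,J)$ is of none of the shapes (i)--(iv) (its length is $15$, which does satisfy \eqref{ineq1}, here $14\le l(\wij)\le 16$). So the ``obvious compatibility'' you invoke is precisely the point that fails, and no argument along these lines can yield the four-case list as stated. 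Note this is also exactly where the paper's appeal to analogy with Proposition \ref{1/2} breaks down: for $t_{(I,J)}=\frac k2$ the pair equation has the single solution $j_l=i_l+1$, $\lambda_{i_l}=\lambda_{j_l}$, so no mixing is possible, whereas for $t_{(I,J)}=k$ there are two admissible types per index.

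What your computation does deliver, once the uniformity assertion is dropped, is the inequality \eqref{ineq1}, which is the only part of the Proposition used later (in Theorem \ref{mainthm1}). Indeed, for any admissible configuration (matched pairs of either type, possibly together with the tail $\{n-1,n\}$ when $\i=\j+2$) Lemma \ref{length} gives, by the same bookkeeping you sketch for (ii) and (iv),
$l(\wij)=k\bigl(n-\tfrac{3k}{4}+\tfrac12\bigr)+\delta+\#\{l:\,j_l=i_l+2\}-\#\{l:\,j_l=i_l+2,\ i_l+1\in I\}$,
with $\delta=0$ if $\i=\j$ and $\delta=1$ if $\i=\j+2$; since the subtracted count is at most $\#\{l:\,j_l=i_l+2\}$ and $\delta+\#\{l:\,j_l=i_l+2\}\le\tfrac k2$, one gets $k(n-\tfrac{3k}{4}+\tfrac12)\le l(\wij)\le k(n-\tfrac{3k}{4}+1)$ in all cases, recovering (i)--(iv) as the pure/extremal instances. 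I would restructure your write-up around this uniform length formula for arbitrary (possibly mixed) admissible $(I,J)$, and treat the four listed configurations only as the extremal cases, rather than asserting the dichotomy you cannot (and, as the example shows, should not) prove.
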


\section{Cohomological cuspidal representations}

\subsection{} As a second ingredient to Eisenstein cohomology we shall determine the {\it
cohomological} (unitary) cuspidal automorphic representations
$\wt\pi\in\varphi_{P_k}$ of $L_k(\A)$. It is clear that
$\wt\pi=\sigma\hat\otimes\tau$, where $\sigma$ (resp. $\tau$) is a
cohomological, unitary cuspidal automorphic representation of
$GL_k(\A)$ (resp. $SO_{2l+1}(\A)$). Further, a representation is
cohomological if and only if its infinite component is. Let us first
consider the $GL_k$-factor.

\subsection{}
Recall that $\sigma_\infty$ is actually a representation of the
semisimple part of $GL_k(\R)$, which is $SL_k^\pm(\R)$. If $k=1$
then $\sigma_\infty$ must be the same character as the one of the
coefficient module in cohomology and if $k=2$, it must be a discrete
series representation. So suppose $k>2$. The cohomological
irreducible, unitary representations $\sigma_\infty$ of
$SL_k^\pm(\R)$, $k>2$, are implicitly classified in \cite{speh} Thm.
4.2.2. Let us recall this result shortly for the case of generic
representations. This is no restriction, since cuspidal automorphic
representations of $GL_k(\A)$, $k\geq 2$, are all globally generic
(cf. \cite{shal} corollary on p. 190). Let
$a=\lfloor\frac{k}{2}\rfloor$, $b=k-2a$ and put
$M(\R)=\prod_{i=1}^{a} SL^\pm_{2}(\R)\times\{\pm1\}^b$. Then we can
say

\begin{prop}[\cite{speh} Thm. 4.2.2]
If $\sigma_\infty$ is a generic, cohomological, irreducible, unitary
representation of $SL^\pm_k(\R)$, $k>2$, then
$$\sigma_\infty=\textrm{\emph{Ind}}^{SL_k^\pm(\R)}_{P(\R)}[\sigma_1\otimes...\otimes\sigma_{a+1}\otimes\C_{\rho_P}].$$
Here $P(\R)=M(\R)A(\R)^\circ N(\R)$ is a parabolic subgroup of
$SL_k^\pm(\R)$ with semisimple part $M(\R)$ as above and $\sigma_i$
is either the trivial representation or a discrete series
representation of $SL_2^\pm(\R)$.
\end{prop}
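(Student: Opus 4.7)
The plan is to combine the Vogan--Zuckerman classification of cohomological unitary $(\g,K)$-modules with the known structure of generic (Whittaker) unitary representations of a real general linear group. First I would invoke the Vogan--Zuckerman theorem: every irreducible unitary representation of $SL_k^{\pm}(\R)$ with non-vanishing $(\g,K)$-cohomology is cohomologically induced from a unitary character on the Levi subalgebra of some $\theta$-stable parabolic subalgebra $\q\subset \sl_k(\C)$, i.e.\ is of the form $A_{\q}(\lambda)$. So the first bookkeeping step is to enumerate the $\theta$-stable parabolic subalgebras of $\sl_k(\C)$ up to $K$-conjugacy and record their real Levi subgroups, which are products of $GL_{n_i}(\R)$'s and $GL_{m_j}(\C)$'s.

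Next I would impose genericity. The key analytic input is that a unitary representation of $GL_k(\R)$ admits a Whittaker model if and only if it is the full parabolic induction of a tempered representation of some Levi (Kostant's criterion in the tempered case, extended to the unitary setting via Vogan's theory of Whittaker models). This collapses each generic $A_\q(\lambda)$ to a representation of the form $\textrm{Ind}_{P(\R)}^{SL_k^{\pm}(\R)}[\tau\otimes\C_{\rho_P}]$ with $\tau$ tempered on the Levi $L_P(\R)$, and forces the $GL_{m_j}(\C)$-factors in the previous step to disappear (since they obstruct the Whittaker model or are absorbed into the induction).

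Third, applying the standard description of tempered representations as full parabolic inductions of essentially discrete series, and using the fact that among real general linear groups only $GL_1(\R)$ and $GL_2(\R)$ possess discrete series, the Levi from which $\sigma_\infty$ is induced must be a product of $GL_2(\R)$- and $GL_1(\R)$-factors. Dividing out the center and matching with the semisimple normalization, this becomes precisely $M(\R)=\prod_{i=1}^{a}SL_2^{\pm}(\R)\times\{\pm 1\}^b$ with $k=2a+b$; each $\sigma_i$ on an $SL_2^{\pm}$-factor is either trivial or a discrete series (these being the possible tempered characters/pieces compatible with the cohomology constraint), and the characters on the $\{\pm 1\}^b$-part adjust the central character.

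The main obstacle I expect is the passage from the cohomological induction $A_{\q}(\lambda)$ to an ordinary real parabolic induction of a tempered representation; one must verify that the genericity hypothesis, together with cohomology, rules out every $\theta$-stable parabolic whose real Levi has complex $GL_{m_j}(\C)$-factors or noncompact-type factors of rank $>2$. This is essentially the content of Speh's Theorem~4.2.2, and once it is in place the remaining classification is routine bookkeeping with conjugacy classes of $\theta$-stable parabolics and discrete series parameters on $SL_2^{\pm}(\R)$.
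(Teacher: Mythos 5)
The paper offers no proof of this Proposition at all: it is imported verbatim from Speh's classification (\cite{speh}, Thm.\ 4.2.2), so your argument is a reconstruction of a cited result rather than a parallel to anything in the text. Your overall strategy --- Vogan--Zuckerman $A_\q(\lambda)$-modules, genericity, and the fact that among real general linear groups only $GL_1(\R)$ and $GL_2(\R)$ carry (essentially) discrete series --- is in spirit exactly the argument the authors \emph{do} spell out one subsection later for the $SO_{2l+1}$-factor (Kostant's genericity criterion combined with Vogan's theory of large modules), so the route is a sensible one.

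However, your key analytic input is misstated, and as written the argument has a genuine gap. It is false that an irreducible unitary representation of $GL_k(\R)$ is generic if and only if it is a full parabolic induction of a tempered representation of a Levi: by Vogan's description of the generic unitary dual, one must also allow complementary-series twists of such inductions with exponents $|s|<\tfrac12$, and these are generic and unitary without being unitarily induced from tempered data. Genericity alone therefore does not ``collapse'' the $A_\q(\lambda)$'s as you claim; you need the cohomological hypothesis --- the infinitesimal character equals that of the finite-dimensional coefficient module, hence is integral and regular --- to rule out the complementary-series parameters. The same regularity is needed, and glossed over, at your final step: a tempered representation induced from a Levi with more $GL_1(\R)$-factors than $b=k-2a$ (equivalently fewer than $a=\lfloor k/2\rfloor$ blocks $GL_2(\R)$) has a repeated entry in its infinitesimal character and so cannot be cohomological; it is this, not the ``semisimple normalization,'' that forces the semisimple part of the Levi to be exactly $\prod_{i=1}^{a}SL_2^\pm(\R)\times\{\pm1\}^b$. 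With these two repairs (and noting that what you would actually prove --- all $\sigma_i$ in the discrete series --- is an admissible strengthening of the stated disjunction ``trivial or discrete series,'' which in Speh's theorem also covers the non-generic cohomological representations), your outline does yield the Proposition.
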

This Proposition together with our above considerations implies the
following remarkable fact: A generic, cohomological, irreducible,
unitary representation $\sigma_\infty$ of $SL_k^\pm(\R)$, $k\geq 2$,
is necessarily tempered (because it is fully and unitarily induced
from a discrete series representation). Hence, by \cite{bowa} III,
Prop. 5.3, we can conclude

\begin{prop}\label{deggl}
Let $\sigma$ be a cuspidal automorphic representation of $GL_k(\A)$,
$k\geq 1$, as above. If $\sigma$ has non-zero cohomology in degree
$q$ then
$$\frac{1}{2}\left(\frac{k(k-1)}{2}+\lfloor\frac{k}{2}\rfloor\right)\leq q\leq
\frac{1}{2}\left(\frac{(k-1)(k+4)}{2}-\lfloor\frac{k}{2}\rfloor\right).$$
\end{prop}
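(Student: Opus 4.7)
\medskip

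\noindent\textbf{Proof proposal for Proposition \ref{deggl}.}

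The plan is to combine the temperedness of $\sigma_\infty$ (already informally argued in the paragraph preceding the statement) with the standard Borel--Wallach bounds for $(\g,K)$-cohomology of tempered representations, and then verify that the resulting range matches the asserted one by a direct computation of $\dim G/K$ and the rank deficiency. Since central characters intervene only by a twist, it suffices to analyze the $(\mathfrak{sl}_k^\pm, O(k))$-cohomology of $\sigma_\infty$, regarded as an irreducible unitary representation of the semisimple group $SL_k^\pm(\R)$.

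First I would dispose of the small cases $k=1$ (where $\sigma_\infty$ is a character and both bounds specialize to $0$) and $k=2$ (where $\sigma_\infty$ is a discrete series and both bounds specialize to $1$), in which the assertion is immediate. For $k\geq 3$ I would invoke Shalika's theorem that every cuspidal automorphic representation of $GL_k(\A)$ is globally generic (cited already in the text), so that $\sigma_\infty$ is a generic, cohomological, irreducible unitary representation of $SL_k^\pm(\R)$. The preceding Proposition (Speh's Theorem 4.2.2) then describes $\sigma_\infty$ as an unnormalized-but-unitary induction from a parabolic $P(\R)=M(\R)A(\R)^\circ N(\R)$ with $M(\R)=\prod SL_2^\pm(\R)\times\{\pm1\}^b$ of a tensor product of discrete series representations (and possibly trivial characters on the $\{\pm1\}$ factors). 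Because unitary induction from a tempered representation is tempered, this yields the crucial temperedness of $\sigma_\infty$.

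With temperedness in hand, I would apply \cite{bowa}, III, Prop.~5.3: for a tempered irreducible unitary representation of a semisimple real Lie group $G_\R$ with maximal compact $K_\R$, the $(\g,K_\R)$-cohomology with coefficients in any finite-dimensional representation vanishes outside the interval $[q_0,\,q_0+\ell_0]$, where
\[
q_0=\tfrac{1}{2}\bigl(\dim(G_\R/K_\R)-\ell_0\bigr),\qquad
\ell_0=\operatorname{rk}_\R(G_\R)-\operatorname{rk}_\R(K_\R).
\]
Specializing to $G_\R=SL_k^\pm(\R)$ and $K_\R=O(k)$ gives $\dim G_\R/K_\R=(k^2-1)-\tfrac{k(k-1)}{2}=\tfrac{(k-1)(k+2)}{2}$ and $\ell_0=(k-1)-\lfloor k/2\rfloor$. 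A short algebraic simplification then rewrites $q_0$ as $\tfrac{1}{2}\bigl(\tfrac{k(k-1)}{2}+\lfloor k/2\rfloor\bigr)$ and $q_0+\ell_0$ as $\tfrac{1}{2}\bigl(\tfrac{(k-1)(k+4)}{2}-\lfloor k/2\rfloor\bigr)$, yielding precisely the bounds in the statement.

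There is no real obstacle here: the nontrivial input, namely the temperedness derived from Speh's classification via Shalika's genericity, is supplied by the two paragraphs immediately preceding the proposition, so the argument reduces to a careful bookkeeping exercise. The only delicate point worth a remark is that the bounds obtained are bounds on the $(\mathfrak{sl}_k^\pm,O(k))$-cohomology of $\sigma_\infty$; passage to $(\mathfrak{gl}_k,K_{GL_k})$-cohomology of $\sigma$ with coefficients in the ambient coefficient module causes no shift, since the central $\R$-split component is handled by the central character matching and contributes no additional cohomological degrees.
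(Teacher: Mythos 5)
Your proposal is correct and follows essentially the same route as the paper: Shalika's genericity, Speh's classification forcing $\sigma_\infty$ to be tempered (unitarily induced from discrete series), and then the Borel--Wallach bounds of \cite{bowa} III, Prop.~5.3 for tempered representations of $SL_k^\pm(\R)$, with the arithmetic giving exactly the stated range. The only nitpick is notational: the quantity $\ell_0$ is the difference of the absolute (complexified) ranks of $\g$ and $\k$, not of real ranks, though your numerical values $(k-1)-\lfloor k/2\rfloor$ are the right ones.
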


See also \cite{schwSLn} Thm. 3.3 for a similar result.

\subsection{}
Let us now turn to the $SO_{2l+1}$-factor and a cohomological,
cuspidal automorphic representation $\tau$ of it. We suppose that
$\tau_\infty$ is locally generic. Combining Kostant's
characterization of generic Harish-Chandra modules in
\cite{kostantgeneric} with Vogan's description of large Harish-Chandra modules in \cite{vogan} Thm. 6.2, we see that
again $\tau_\infty$ must be induced from a discrete series
representation. Furthermore, the Gelfand-Kirillov dimension (cf.
\cite{vogan} for a definition)
%i.e. the degree of the Hilbert polynomial of $\tau_\infty$ as a module for the associated graded of the universal enveloping algebra of $so(2l+1)$
of $\tau_\infty$ must be maximal; that is equal to $l^2$ in our
present case. On the other hand $\tau_\infty$ is cohomological,
whence it is an $A_\q(\lambda)$-module in the sense of Vogan and
Zuckerman (\cite{vozu}). By checking which $A_\q(\lambda)$-modules
of $SO(l+1,l)^\circ(\R)$ are actually of Gelfand-Kirillov dimension
$l^2$, we see that $\tau_\infty$ must be a discrete series
representation. Hence we have proved

\begin{prop}
If $\tau_\infty$ is a generic, cohomological, irreducible unitary
representation of $SO(l+1,l)(\R)$ then it is in the discrete series.
\end{prop}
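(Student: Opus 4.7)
The plan is to combine the genericity constraint (via Kostant--Vogan) with the cohomology constraint (via Vogan--Zuckerman) and let the two together pin down $\tau_\infty$. First I would observe that genericity forces the Gelfand--Kirillov dimension of $\tau_\infty$ to be maximal: by Kostant \cite{kostantgeneric}, the existence of a Whittaker vector for a regular generic character on $N(\R)$ severely restricts the irreducible admissible $(\g,K)$-modules that can admit it; by Vogan's Theorem 6.2 in \cite{vogan} this class coincides with the ``large'' Harish--Chandra modules, i.e.\ those whose Gelfand--Kirillov dimension equals one half of the complex dimension of the principal nilpotent orbit in $\g^*_\C$. For $\g_\C = \so_{2l+1}(\C)$ this common maximum equals $l^2$, so $\tau_\infty$ must have Gelfand--Kirillov dimension exactly $l^2$.

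Next I would invoke the cohomology constraint. Since $\tau_\infty$ is irreducible, unitary and has non-zero $(\g, K)$-cohomology with respect to a finite-dimensional coefficient module, the Vogan--Zuckerman classification \cite{vozu} tells us that $\tau_\infty \cong A_\q(\lambda)$ for some $\theta$-stable parabolic subalgebra $\q = \l \oplus \u$ of $\g_\C$ and some admissible one-dimensional unitary character $\lambda$ of $\l$. It is then a general fact about associated varieties of cohomologically induced modules that the Gelfand--Kirillov dimension of $A_\q(\lambda)$ equals $\dim_\C K_\C \cdot (\u \cap \p_\C)$, a quantity depending only on $\q$.

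The final step is to match the two constraints: among the $A_\q(\lambda)$-modules of $SO(l+1,l)(\R)$, the maximum value $l^2$ of $\dim_\C K_\C \cdot (\u \cap \p_\C)$ is attained precisely when the Levi factor $\l$ is compact, equivalently when $\q$ corresponds to a compact Cartan subalgebra and $A_\q(\lambda)$ is a discrete series representation. Because $SO(l+1,l)$ has equal rank with its maximal compact subgroup $SO(l+1) \times SO(l)$, a compact Cartan---and hence discrete series representations---do exist, so this case is non-empty and forces $\tau_\infty$ into the discrete series. The main technical obstacle is exactly this last combinatorial check: enumerating $\theta$-stable parabolic subalgebras of $\so_{2l+1}(\C)$ up to $K_\C$-conjugacy and verifying that $\dim_\C K_\C \cdot (\u \cap \p_\C) < l^2$ as soon as $\l$ has any non-compact factor---a finite computation entirely in the root data of type $B_l$.
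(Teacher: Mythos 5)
Your proof follows essentially the same route as the paper's: Kostant's characterization of generic Harish--Chandra modules combined with Vogan's Theorem 6.2 forces the Gelfand--Kirillov dimension of $\tau_\infty$ to be maximal, equal to $l^2$; the Vogan--Zuckerman classification identifies $\tau_\infty$ with some $A_\q(\lambda)$; and the remaining (finite, root-theoretic) check is that among these only the modules with compact Levi, i.e.\ the discrete series, attain Gelfand--Kirillov dimension $l^2$. One caution: your ``precisely'' claims an equivalence that is false in the converse direction, since a compact Levi does not by itself give maximal Gelfand--Kirillov dimension (e.g.\ for $l=2$, $\so(3,2)\cong\mathfrak{sp}_4(\R)$ has holomorphic-type discrete series, with $\u\cap\p_\C=\p^+$ and Gelfand--Kirillov dimension $3<4$); but only the implication you actually propose to verify---that any noncompact factor of $\l$ forces $\dim_\C K_\C\cdot(\u\cap\p_\C)<l^2$---is needed, so your argument is sound and coincides with the paper's.
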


By \cite{bowa} II, Thm. 5.4 we therefore have

\begin{prop}\label{degso}
Let $\tau$ be a cuspidal automorphic representation of
$SO_{2l+1}(\A)$ as above. If $\tau$ has non-zero cohomology in
degree $q$ then
$$q=\frac{l^2+l}{2}.$$
\end{prop}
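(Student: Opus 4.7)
The plan is to leverage the preceding proposition directly together with the classical fact, recorded as \cite{bowa} II, Thm. 5.4, that the $(\g,K)$-cohomology of a discrete series representation of a real semisimple group with non-trivial cohomology is concentrated in the middle degree of the symmetric space $G(\R)/K$. So the argument decomposes into two short steps: an application of the structural input already in hand, and a dimension count.

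First I would invoke the previous proposition to conclude that under the standing hypothesis of local genericity and non-triviality of cohomology, the archimedean component $\tau_\infty$ is a discrete series representation of $SO(l+1,l)(\R)$. In particular the coefficient system $E$ restricted along $\tau_\infty$ satisfies the usual infinitesimal character matching required for non-vanishing cohomology, and hence the hypothesis of the Borel--Wallach theorem is met.

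Next I would apply \cite{bowa} II, Thm. 5.4: for a discrete series representation the only possibly non-vanishing degree of $(\g,K)$-cohomology equals $\tfrac{1}{2}\dim(G(\R)/K_G)$, where here $G=SO_{2l+1}$ viewed as the split real form, so that $K_G$ is (up to components) $S(O(l+1)\times O(l))$. The computation is then elementary: $\dim SO_{2l+1} = l(2l+1)$, while $\dim\bigl(SO(l+1)\times SO(l)\bigr) = \tfrac{(l+1)l}{2}+\tfrac{l(l-1)}{2} = l^{2}$. Subtracting yields $\dim(G(\R)/K_G) = l(2l+1)-l^{2} = l^{2}+l$, and half of this is the claimed $\tfrac{l^{2}+l}{2}$.

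There is really no serious obstacle: both the identification of $\tau_\infty$ as a discrete series and the Borel--Wallach vanishing/concentration statement are cited from the literature, and the only remaining content is the dimension count of the Riemannian symmetric space attached to split $SO_{2l+1}$. The one place to be a little careful is to make sure the dimension count is performed using the \emph{real} form actually supporting the archimedean representation under consideration, namely the split form $SO(l+1,l)$; once this is noted the result follows immediately.
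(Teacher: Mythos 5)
Your proposal is correct and follows essentially the same route as the paper, which likewise deduces the statement from the preceding proposition (that $\tau_\infty$ is a discrete series representation of $SO(l+1,l)(\R)$) by citing \cite{bowa} II, Thm.\ 5.4 on the concentration of $(\g,K)$-cohomology of discrete series in the middle degree. Your explicit dimension count $\dim\bigl(SO(l+1,l)(\R)/K\bigr)=l(2l+1)-l^{2}=l^{2}+l$ is exactly the computation the paper leaves implicit.
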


\subsection{}
Putting our Propositions \ref{deggl} and \ref{degso} together we can
finally conclude by the K\"unneth rule (\cite{bowa} 1.3) the
following

\begin{thm}\label{boundsq}
Let $\wt\pi\in\varphi_{P_k}$ be a cuspidal automorphic
representation of $L_k(\A)$ having a generic archimedean component
$\wt\pi_\infty$. If $\wt\pi$ has non-trivial cohomology in degree
$q$ then
$$\frac{1}{2}\left(\frac{k(k-1)}{2}+\lfloor\frac{k}{2}\rfloor+l^2+l\right)\leq q\leq
\frac{1}{2}\left(\frac{(k-1)(k+4)}{2}-\lfloor\frac{k}{2}\rfloor+l^2+l\right).$$
\end{thm}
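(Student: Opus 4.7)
The plan is to reduce the problem to the two factors of the Levi subgroup $L_k \cong GL_k \times SO_{2l+1}$ and then apply the bounds established in the preceding two subsections. Since the archimedean local component $\wt\pi_\infty$ is, by hypothesis, a generic representation of $L_k(\R)$, its two tensor factors inherit this property: the first factor $\sigma_\infty$ is a generic, cohomological, irreducible, unitary representation of $GL_k(\R)$, and the second factor $\tau_\infty$ is a generic, cohomological, irreducible, unitary representation of $SO(l+1,l)(\R)$ (genericity is preserved under the decomposition since a Whittaker functional on the product factors as a product of Whittaker functionals, and cohomology forces unitarity via Wigner's lemma at infinity).

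Next, I would invoke the K\"unneth rule for relative Lie algebra cohomology (\cite{bowa} 1.3): under the isomorphism $\l_k \cong \gl_k \oplus \so_{2l+1}$ with corresponding decomposition of the maximal compact subgroup $K_{L_k}$, one has
$$H^q(\l_k,K_{L_k};\wt\pi_\infty \otimes F) \cong \bigoplus_{q_1+q_2=q} H^{q_1}(\gl_k,K_{GL_k};\sigma_\infty \otimes F_1) \otimes H^{q_2}(\so_{2l+1},K_{SO_{2l+1}};\tau_\infty \otimes F_2),$$
where $F = F_1 \boxtimes F_2$ is the appropriate factorization of the coefficient module. Consequently, if $\wt\pi$ has non-trivial cohomology in degree $q$, then $q = q_1 + q_2$ for some $q_1$ in which $\sigma$ has non-trivial cohomology and some $q_2$ in which $\tau$ does.

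It then remains only to add the inequalities from Propositions \ref{deggl} and \ref{degso}. Proposition \ref{degso} pins $q_2$ to the exact value $\frac{l^2+l}{2}$, while Proposition \ref{deggl} sandwiches $q_1$ between $\frac{1}{2}\bigl(\frac{k(k-1)}{2}+\lfloor\frac{k}{2}\rfloor\bigr)$ and $\frac{1}{2}\bigl(\frac{(k-1)(k+4)}{2}-\lfloor\frac{k}{2}\rfloor\bigr)$. Summing yields precisely the asserted range for $q = q_1 + q_2$.

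The argument is essentially bookkeeping once the groundwork is in place; there is no real obstacle. The only subtle points worth flagging are (i) verifying that the genericity hypothesis on $\wt\pi_\infty$ indeed transfers to each factor — which is standard for representations of a product of reductive groups — and (ii) reassuring oneself that the cases $k=1$ and $k=2$ (excluded from the statement of the $SL_k^\pm$ classification used to prove Proposition \ref{deggl}) are already absorbed into the bounds of that proposition by the direct description of their archimedean components, so that no separate treatment is required here.
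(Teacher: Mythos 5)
Your argument is exactly the paper's: the theorem is deduced by combining Propositions \ref{deggl} and \ref{degso} via the K\"unneth rule of \cite{bowa} 1.3, so the bounds for $q=q_1+q_2$ follow by adding the $GL_k$-range to the fixed $SO_{2l+1}$-degree $\frac{l^2+l}{2}$. Your additional remarks on the factorization of genericity and on the small cases $k=1,2$ are harmless elaborations of the same bookkeeping.
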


\section{Poles of Eisenstein series}\label{sec:poles}

\subsection{}
We will now calculate the possible poles of our Eisenstein series.
By the Langlands ``Square-Integrability Criterion'', \cite{moewal},
Lemma I.4.11, this amounts to determining the part of the residual
spectrum of $G(\A)=SO_{2n+1}(\A)$ which is given by the maximal
parabolic subgroups $P_k$, a task that was achieved by H. Kim in
\cite{kim01}. For the convenience of the reader and for sake of
completeness of our presentation we repeat his arguments briefly.
Still our results here will be in a somewhat different guise.\\\\
Let now $\pi=\hat\otimes'\pi_p$ be a (cohomological) globally {\it
generic} cuspidal automorphic representation of a maximal Levi
subgroup $L_k(\A)=GL_k(\A)\times SO_{2l+1}(\A)$, $1\leq k\leq n$,
$l=n-k$. As already remarked earlier, such a representation is
necessarily of the form $\pi=\sigma\hat\otimes\tau$, where $\sigma$
is a cuspidal representation of $GL_k(\A)$ and $\tau$ a generic,
cuspidal representation of $SO_{2l+1}(\A)$. It enjoys Strong
Multiplicity One, combining the results Thm. 4.4 in \cite{jacsha},
and Thm. 9 in \cite{ginzralsou}. Hence, by the multiplicity one
Theorem for $GL_k$, $m(\pi)=m(\tau)$ holds and following Arthur's
Conjecture, even $m(\tau)=1$ should be true. We will not assume
this. We identify $\Lambda=t\alpha_k\in(\a_k)^*_\C$ with
$s=\frac{t}{k}\in\C$ if $k<n$ and with $s=\frac{2t}{n}\in\C$ if
$k=n$, following \cite{shahidi2}, p. 552. We will also omit the subscript ''$k$''.\\\\
Let $f\in \textrm{I}_{P,\pi}$ then
$f_s=fe^{(s+\rho_P)H_P(.)}\in\textrm{I}_{P,\pi,s}$. The holomorphic
behavior of the Eisenstein series $E_P(f,s)$ is the same as the one
of its constant term along $P$ (cf. \cite{lang}, \cite{moewal},
IV.1.10), which can be rewritten as

$$E_P(f,s)_P=f_s+M(s,\pi)f_s.$$
Here $M(s,\pi)$ is the standard intertwining operator (cf. section
\ref{eisintw}) of $(\g,K,G(\A_f))$-modules
$$M(s,\pi):\textrm{I}_{P,\pi,s}\ra\textrm{I}_{P,w_0(\pi),-s}$$
$$M(s,\pi)f_s(g)=\int_{N(\Q)\cap w_0N(\Q)w_0^{-1}\backslash
N(\A)}f_s(w_0^{-1}ng)dn,$$ $w_0$ the only non-trivial element in
$W(A)=N_{G(\Q)}(A(\Q))/L(\Q)$. Therefore the poles of $E_{P}(f,s)$
are determined by the interplay of the poles and zeros of
$M(s,\pi)$. By twisting $\pi$ by an appropriate imaginary power of
the absolute value of the determinant we may and will assume that
all poles are real, that is $s=\Re e(s)$ in the sequel. \\\\Let $S$
be the finite set of places containing the archimedean one and the
places where $\pi$ ramifies. Using the Langlands-Shahidi method (cf.
\cite{lang3, shahidi3, shahidi2}) we see that for suitably
normalized, $L(\Z_p)$-fixed functions $\widetilde f_{s,p}$

$$M(s,\pi)f_s=\bigotimes_{p\in S}A(s,\pi_p)f_{s,p}\otimes\prod_{p\notin S}\frac{L(s,\sigma_p\times\tau_p)L(2s,\sigma_p,\mathrm{Sym}^2)}{L(1+s,\sigma_p\times\tau_p)L(1+2s,\sigma_p,\mathrm{Sym}^2)}\widetilde f_{s,p}.$$
if $k<n$ and

$$M(s,\pi)f_s=\bigotimes_{p\in S}A(s,\pi_p)f_{s,p}\otimes\prod_{p\notin S}\frac{L(s,\sigma_p,\mathrm{Sym}^2)}{L(1+s,\sigma_p,\mathrm{Sym}^2)}\widetilde f_{s,p}.$$
if $k=n$. Here $L(s,\sigma_p\times\tau_p)$ is the (local)
Rankin-Selberg $L$-function associated to
$\pi_p=\sigma_p\hat\otimes\tau_p$ and $L(s,\sigma_p,\mathrm{Sym}^2)$
is the (local) symmetric square $L$-function of $\sigma_p$. For
convenience, we distinguish the cases $k<n$ and $k=n$ in what
follows.

\subsection{$k<n$}\label{k<n} Shahidi defined the $L$-functions $L(s,\sigma_p\times\tau_p)$ and $L(s,\sigma_p,\mathrm{Sym}^2)$ also for the places $p\in S$. By \cite{kim01}, Prop. 4.1,
$$N(s,\pi_p)=\frac{L(1+s,\sigma_p\times\tau_p)L(1+2s,\sigma_p,\mathrm{Sym}^2)}{L(s,\sigma_p\times\tau_p)L(2s,\sigma_p,\mathrm{Sym}^2)}A(s,\pi_p)$$
is holomorphic and non-zero for $s\geq 0$. Therefore

\begin{prop}
There is an $f\in \textrm{\emph{I}}_{P,\pi}$ such that the Eisenstein
series $E_P(f,s)$ has a pole at $s=s_0$, $s_0>0$, if and only if
$$\frac{L(s,\sigma\times\tau)L(2s,\sigma,\mathrm{Sym}^2)}{L(1+s,\sigma\times\tau)L(1+2s,\sigma,\mathrm{Sym}^2)}$$
has a pole at $s=s_0$.
\end{prop}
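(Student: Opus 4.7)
The plan is to reduce the analysis of poles of $E_P(f,s)$ to those of its constant term, and then to separate the ramified and unramified contributions of the global intertwining operator via Kim's normalization.

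First, I would invoke the Langlands-Moeglin-Waldspurger principle recalled earlier in the section: the holomorphic behavior of $E_P(f,s)$ on the positive half line coincides with that of its constant term $E_P(f,s)_P = f_s + M(s,\pi) f_s$. Since $s \mapsto f_s = f e^{(s+\rho_P) H_P(\cdot)}$ is entire in $s$, the poles of $E_P(f,s)$ at $s = s_0 > 0$ are precisely the poles of $M(s,\pi) f_s$ there, as $f$ varies in $\textrm{I}_{P,\pi}$.

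Next, I would use the factorization of $M(s,\pi)$ displayed above, namely
$$M(s,\pi) f_s = \bigotimes_{p\in S} A(s,\pi_p) f_{s,p} \otimes \prod_{p\notin S} \frac{L(s,\sigma_p\times\tau_p)L(2s,\sigma_p,\mathrm{Sym}^2)}{L(1+s,\sigma_p\times\tau_p)L(1+2s,\sigma_p,\mathrm{Sym}^2)} \widetilde f_{s,p},$$
together with Kim's Proposition 4.1 which defines the normalization
$$N(s,\pi_p) = \frac{L(1+s,\sigma_p\times\tau_p)L(1+2s,\sigma_p,\mathrm{Sym}^2)}{L(s,\sigma_p\times\tau_p)L(2s,\sigma_p,\mathrm{Sym}^2)} A(s,\pi_p),$$
holomorphic and non-zero for $s \geq 0$, at each $p \in S$. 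Rewriting $A(s,\pi_p)$ in terms of $N(s,\pi_p)$ and the reciprocal local $L$-factor ratio, the local $L$-ratios at ramified places merge with the unramified product into a \emph{global} $L$-ratio. Concretely,
$$M(s,\pi) f_s = \frac{L(s,\sigma\times\tau)L(2s,\sigma,\mathrm{Sym}^2)}{L(1+s,\sigma\times\tau)L(1+2s,\sigma,\mathrm{Sym}^2)} \cdot \bigotimes_{p \in S} N(s,\pi_p) f_{s,p} \otimes \prod_{p \notin S} \widetilde f_{s,p}.$$

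Finally, I would deduce the two implications. The nontrivial ``only if'' direction is immediate: if the global $L$-ratio is holomorphic at $s_0 > 0$, then so is every $M(s,\pi) f_s$, since the tensor factor on the right is holomorphic in $s$ at $s_0$ (the $N(s,\pi_p)$ being holomorphic there by Kim, and the unramified factors $\widetilde f_{s,p}$ being regular). For the ``if'' direction, suppose the $L$-ratio has a pole at $s_0 > 0$; I would choose $f = \otimes_p f_p$ so that at each $p \in S$ the section $f_{s_0,p}$ does not lie in the kernel of $N(s_0,\pi_p)$---possible because $N(s_0,\pi_p)$ is a non-zero operator by Kim---and so that at each unramified place the normalized spherical vector is used. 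Then the global tensor factor is non-zero at $s_0$ and $M(s,\pi) f_s$ inherits the pole.

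The main obstacle is the ``if'' direction: one must rule out that the ramified local operators $N(s_0, \pi_p)$ conspire to annihilate every section, i.e.\ one must exploit Kim's non-vanishing statement to guarantee the existence of a suitable section $f$. Once this is secured, together with the holomorphy of $N(s,\pi_p)$ on $s \geq 0$, the equivalence follows directly from the rewritten formula.
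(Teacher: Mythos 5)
Your proposal is correct and follows essentially the same route as the paper: reduce to the constant term $f_s+M(s,\pi)f_s$, factor $M(s,\pi)$ via the Langlands--Shahidi method, and use Kim's Proposition 4.1 (holomorphy and non-vanishing of the normalized local operators $N(s,\pi_p)$ for $s\geq 0$) to transfer the poles to the global quotient of $L$-functions, with the ``if'' direction secured by choosing a section outside the kernels of the $N(s_0,\pi_p)$. The paper leaves exactly these steps implicit, so your write-up is just a more detailed version of its argument.
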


It is well-known (\cite{kim01,kim08}) that $L(s,\sigma\times\tau)$
is meromorphic with possible poles only at $s=0,1$ and non-vanishing
for $s>1$. Similarly, $L(s,\sigma,\mathrm{Sym}^2)$ is holomorphic
for $s\geq 1$ (except possibly at $s=1$) and non-zero there. Hence,
the poles of $M(s,\pi)$ in $s>0$ are the ones of

\begin{equation}\label{L}
L(s,\sigma\times\tau)L(2s,\sigma,\mathrm{Sym}^2)
\end{equation}
and so - by what we just observed - we conclude that the only
possible poles of Eisenstein series $E_P(f,s)$ in the region
$s\geq\frac{1}{2}$ are at $s=\frac{1}{2},1$. This is enough for us,
as we will only need to consider Eisenstein series at
$s\geq\frac{1}{2}$, cf. Proposition \ref{nows}. Just for
completeness, let us remark that it is not known, if
$L(2s,\sigma,\mathrm{Sym}^2)$ has poles in the remaining region
$0<s<\frac{1}{2}$. But it is shown in Cor. 3.2 of \cite{neven} that
this is not the case, i.e., $L(2s,\sigma,\mathrm{Sym}^2)$ is
holomorphic for $0< s <\frac{1}{2}$, if Arthur's Conjecture as
formulated in section 30 of \cite{arthur} (see also section 2 of
\cite{neven} for a precise reformulation adapted to this purpose) on
the discrete spectrum holds. However, we will not need this.
%Neven zeigt dies nur für k>1. Aber für k=1 ist L(s,\chi,\mathrm{Sym}^2)=L(s,\triv),
%die normale Hecke-L-Funktion. Sie hat Pole nur bei s=0,1, damit stimmt
%das Resultat auch in diesem Fall!
\\\\Let us render the above more precise. If $\sigma$ is not self-dual,
then both $L$-functions in (\ref{L}) are entire. So let us from now on assume that $\sigma$ is self-dual.\\\\
Let us first consider the case $s=\frac{1}{2}$. Then the pole can
originate only from the symmetric square $L$-function. If either $k$
is odd or the central character $\omega_\sigma$ of $\sigma$ is
non-trivial, then this is the case, i.e.,
$L(2s,\sigma,\mathrm{Sym}^2)$ has a pole at $s=\frac{1}{2}$. This is
well-known and can be seen as follows: The statement for odd $k$ is
a consequence of the Rankin-Selberg convolutions of either
\cite{bumfrie} or \cite{kazpat}. If $k$ is even, but
$\omega_\sigma\neq\triv$, then the corresponding assertion is proved
in Prop. 3.7 of \cite{kim01}. So, let now $k=2$ and
$\omega_\sigma\equiv\triv$. Then $L(2s,\sigma,\mathrm{Sym}^2)$ is
holomorphic at $s=\frac{1}{2}$. This is clear by the following easy
consideration: Recall that

\begin{equation}\label{eq:LLL}
L(s,\sigma\times\sigma)=L(s,\sigma,\mathrm{Sym}^2)L(s,\sigma,\wedge^2)
\end{equation}
has a simple pole at $s=1$, because $\sigma$ is supposed to be
self-dual. We also may replace all $L$-functions by their partial
analogues with respect to the set $S$ without changing this
assertion, since $\sigma_\infty$ is as cohomological representation
of discrete series, whence all $\sigma_p$ are tempered
(\cite{delWeil}, Thm. I.6, together with \cite{deligneW-R}, Thm.
5.6) and so the local $L$-functions of $\sigma_p$ are holomorphic
and non-vanishing for $s>0$. But for $p\notin S$,
$L(s,\sigma_p,\wedge^2)=L(s,\omega_{\sigma_p})$, which has a pole at
$s=1$ as $\omega_\sigma\equiv\triv$ by assumption and so together
with (\ref{eq:LLL}) we see that $L(s,\sigma,\mathrm{Sym}^2)$ must be
holomorphic at $s=1$. \\\\In order to understand the situation
better in the case of even $k\geq 4$ and $\omega_\sigma\equiv\triv$,
we reformulate it in terms of the Weak Langlands Functoriality. The
references for this are \cite{ckpss}, \cite{ginzralsou}, \cite{gold}
and \cite{soudry}. Suppose that $k\geq 4$ is even and that the
central character of $\sigma$ is trivial. In this situation we know
by automorphic descent that if $L(2s,\sigma,\mathrm{Sym}^2)$ has a
pole at $s=\frac{1}{2}$, then $\sigma$ is the Weak Langlands
Functorial Lift from a globally generic cuspidal automorphic
representation of $SO_k(\A)$. On the other hand, if
$L(2s,\sigma,\mathrm{Sym}^2)$ is holomorphic at $s=\frac{1}{2}$,
then automorphic descent tells us that $\sigma$ is the Weak
Langlands Functorial Lift from a globally generic cuspidal
automorphic representation of $SO_{k+1}(\A)$. We may conclude that
$\sigma$ will therefore not be a Weak Langlands Functorial Lift from
a cuspidal automorphic representation of $SO_k(\A)$.
%Siehe Cogdell, P-S, Shahidi paper im Honour-Band von Shahidi, S. 19
We remark that $s=\frac{1}{2}$ means $t=\frac{k}{2}$ and get the
following

\begin{prop}[Poles for $s=\frac{1}{2}$]\label{s=1/2}
There is an $f\in \textrm{\emph{I}}_{P,\pi}$ such that the Eisenstein
series $E_P(f,s)$ has a pole at $s=\frac{1}{2}$ if and only if
\begin{enumerate}
\item (In case $k\geq 4$ is even and $\omega_\sigma\equiv\triv$): $\sigma$ is a self-dual Weak Langlands functorial lift of a globally generic cuspidal automorphic representation of $SO_k(\A)$ and $L(\frac{1}{2},\sigma\times\tau)\neq 0$.
\item (In case $k$ is odd or $\omega_\sigma\neq\triv$): $\sigma$ is self-dual and $L(\frac{1}{2},\sigma\times\tau)\neq 0$.
\end{enumerate}
\end{prop}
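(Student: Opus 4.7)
The plan is to apply the proposition immediately preceding this statement, which reduces the existence of a pole of $E_P(f,s)$ at $s=\frac{1}{2}$ (for some $f$) to the existence of a pole of
\[
\frac{L(s,\sigma\times\tau)\,L(2s,\sigma,\mathrm{Sym}^2)}{L(1+s,\sigma\times\tau)\,L(1+2s,\sigma,\mathrm{Sym}^2)}
\]
at that same point. The denominator at $s=\frac{1}{2}$ equals $L(\frac{3}{2},\sigma\times\tau)\,L(2,\sigma,\mathrm{Sym}^2)$, which is holomorphic and non-zero by the analytic facts about these $L$-functions recalled just above the statement. So the pole question is transferred to the numerator $L(\frac{1}{2},\sigma\times\tau)\,L(1,\sigma,\mathrm{Sym}^2)$, with the additional caveat that any pole of $L(1,\sigma,\mathrm{Sym}^2)$ gets cancelled precisely when $L(\frac{1}{2},\sigma\times\tau)=0$.

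Since $L(s,\sigma\times\tau)$ has possible poles only at $s=0,1$, it is holomorphic at $s=\frac{1}{2}$; the only possible source of a pole in the numerator is therefore $L(s,\sigma,\mathrm{Sym}^2)$ at $s=1$. If $\sigma$ is not self-dual this $L$-function is entire, so a pole cannot occur; hence $\sigma$ is forced to be self-dual in both alternatives of the conclusion. For self-dual $\sigma$ I would case-split according to the parity of $k$ and the nature of $\omega_\sigma$, using the results gathered just above the statement: the Rankin--Selberg convolutions of Bump--Friedberg and Kazhdan--Patterson yield the pole of $L(s,\sigma,\mathrm{Sym}^2)$ at $s=1$ when $k$ is odd; Kim's Proposition~3.7 in \cite{kim01} yields it when $k$ is even with $\omega_\sigma\not\equiv\triv$; while the factorisation $L(s,\sigma\times\sigma)=L(s,\sigma,\mathrm{Sym}^2)\,L(s,\sigma,\wedge^2)$, combined with $L(s,\sigma_p,\wedge^2)=L(s,\omega_{\sigma_p})$ for $p\notin S$ and temperedness at every place (from the cohomological, discrete-series nature of $\sigma_\infty$ via Deligne and Clozel), rules out the pole when $k=2$ and $\omega_\sigma\equiv\triv$, since there the pole of the left hand side at $s=1$ is fully absorbed by $L(s,\omega_\sigma)$.

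The decisive remaining case, and the one I expect to be the main obstacle, is $k\geq 4$ even with $\omega_\sigma\equiv\triv$. Here I would invoke the theory of automorphic descent of Cogdell--Kim--Piatetski-Shapiro--Shahidi, Ginzburg--Rallis--Soudry and Soudry cited just above the statement, whose net outcome is the dichotomy: a self-dual cuspidal automorphic representation $\sigma$ of $GL_k(\A)$ with trivial central character is a Weak Langlands functorial lift from a globally generic cuspidal automorphic representation of $SO_k(\A)$ if and only if $L(s,\sigma,\mathrm{Sym}^2)$ has a pole at $s=1$, and otherwise arises as such a lift from $SO_{k+1}(\A)$ (in which case $L(s,\sigma,\mathrm{Sym}^2)$ is holomorphic at $s=1$). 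Combining this dichotomy with the three elementary cases above, and in every case imposing the non-vanishing $L(\frac{1}{2},\sigma\times\tau)\neq 0$ to ensure the symmetric-square pole is not cancelled by the Rankin--Selberg factor, yields exactly the two alternatives (1) and (2) of the proposition.
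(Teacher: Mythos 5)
Your proposal is correct and follows essentially the same route as the paper: the paper's argument (laid out in the text of Section~\ref{k<n} just before the proposition) likewise reduces via the preceding proposition and the holomorphy/non-vanishing of the denominator to locating poles of $L(s,\sigma\times\tau)L(2s,\sigma,\mathrm{Sym}^2)$, forces self-duality of $\sigma$, treats the odd $k$, even $k$ with $\omega_\sigma\neq\triv$, and $k=2$ with $\omega_\sigma\equiv\triv$ cases by exactly the references and the $\mathrm{Sym}^2\cdot\wedge^2$ factorisation you cite, and handles even $k\geq 4$ with trivial central character by the same automorphic descent dichotomy, with $L(\frac{1}{2},\sigma\times\tau)\neq 0$ ensuring the pole is not cancelled. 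No substantive difference in approach.
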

The case $s=1$ is similar. There the pole can originate only from
the Rankin-Selberg $L$-function. It is entire, if $k$ is not even,
\cite{kim01}. But we know that $L(2,\sigma,\mathrm{Sym}^2)\neq 0$.
Hence we get the following

\begin{prop}[Poles for $s=1$]\label{s=1}
There is an $f\in \textrm{\emph{I}}_{P,\pi}$ such that the Eisenstein
series $E_P(f,s)$ has a pole at $s=1$ if and only if $\sigma$ is
self-dual, $k$ is even and $L(s,\sigma\times\tau)$ has a pole at
$s=1$.
\end{prop}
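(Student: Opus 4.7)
The plan is to transport verbatim the analytic strategy used for Proposition \ref{s=1/2} to the point $s=1$. By the Proposition preceding \ref{s=1/2} (which reduces poles of $E_P(f,s)$ in $s>0$ to poles of the ratio of $L$-functions after Kim's normalization), it suffices to study
$$\frac{L(s,\sigma\times\tau)\,L(2s,\sigma,\mathrm{Sym}^2)}{L(1+s,\sigma\times\tau)\,L(1+2s,\sigma,\mathrm{Sym}^2)}$$
at $s=1$, and to verify separately that numerator and denominator do not conspire to cancel the pole.

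First I would dispose of the denominator. At $s=1$ it reduces to $L(2,\sigma\times\tau)\,L(3,\sigma,\mathrm{Sym}^2)$, and since the local components of $\sigma$ and $\tau$ are tempered (by the reasoning already recorded before Proposition \ref{s=1/2} using \cite{delWeil} and \cite{deligneW-R}), both $L$-values lie strictly inside the region of absolute convergence and are therefore holomorphic and non-vanishing. Next, within the numerator the factor $L(2s,\sigma,\mathrm{Sym}^2)=L(2,\sigma,\mathrm{Sym}^2)$ is likewise holomorphic and non-zero at $s=1$; the paper has already quoted this fact as $L(2,\sigma,\mathrm{Sym}^2)\neq 0$. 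Hence any pole of $E_P(f,s)$ at $s=1$ must come exclusively from $L(s,\sigma\times\tau)$.

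It thus remains to pin down when $L(s,\sigma\times\tau)$ acquires a pole at $s=1$. Two standard ingredients, both already invoked in the text of this subsection, suffice. If $\sigma$ is not self-dual, then $L(s,\sigma\times\tau)$ is entire, so a pole forces self-duality of $\sigma$. Furthermore, Kim's result \cite{kim01} recalled at the start of \S\ref{k<n} establishes that $L(s,\sigma\times\tau)$ is entire whenever $k$ is odd. Therefore a pole at $s=1$ compels $\sigma$ to be self-dual \emph{and} $k$ to be even. Conversely, under these conditions a pole of $L(s,\sigma\times\tau)$ at $s=1$ is not cancelled by any of the three other $L$-factors, so it produces, via an appropriate choice of $f\in\textrm{I}_{P,\pi}$, an honest pole of $E_P(f,s)$.

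The main---and in fact only---delicate input is the non-vanishing $L(2,\sigma,\mathrm{Sym}^2)\neq 0$; once one imports it together with the holomorphy/non-vanishing statements already compiled before Proposition \ref{s=1/2}, the argument reduces to bookkeeping of $L$-function properties exactly parallel to the $s=\tfrac{1}{2}$ case, with the roles of $L(2s,\sigma,\mathrm{Sym}^2)$ and $L(s,\sigma\times\tau)$ swapped.
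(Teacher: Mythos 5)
Your argument is, in substance, the paper's own: reduce to the ratio of $L$-functions via Kim's normalization, observe that at $s=1$ the denominator $L(2,\sigma\times\tau)L(3,\sigma,\mathrm{Sym}^2)$ and the numerator factor $L(2,\sigma,\mathrm{Sym}^2)$ are holomorphic and non-zero, so a pole can only come from $L(s,\sigma\times\tau)$ at $s=1$, which forces $\sigma$ self-dual (otherwise both numerator $L$-functions are entire) and $k$ even (otherwise $L(s,\sigma\times\tau)$ is entire, by \cite{kim01}); conversely such a pole is not cancelled, so some $f$ yields a genuine pole of $E_P(f,s)$. The one justification you should not use is temperedness: you assert that all local components of $\sigma$ and $\tau$ are tempered ``by the reasoning already recorded before Proposition \ref{s=1/2}'', but that Deligne argument (\cite{delWeil}, \cite{deligneW-R}) applies in the paper only to the case $k=2$; for general $k$, and a fortiori for $\tau$ on $SO_{2l+1}$, temperedness of every local component is the open Ramanujan problem, so you cannot place $L(2,\sigma\times\tau)$ and $L(3,\sigma,\mathrm{Sym}^2)$ inside a region of absolute convergence on those grounds. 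This does not derail the proof, since the facts actually needed --- $L(s,\sigma\times\tau)$ has poles at most at $s=0,1$ and is non-vanishing for $s>1$, and $L(s,\sigma,\mathrm{Sym}^2)$ is holomorphic and non-zero for $s\geq 1$ except possibly at $s=1$ --- are precisely the Kim--Shahidi inputs (\cite{kim01}, \cite{kim08}) quoted at the start of \S\ref{k<n}, which you also invoke; with those substituted for the temperedness remark, your bookkeeping coincides with the paper's proof.
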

Observe that $s=1$ corresponds to $t=k$.

\subsection{$k=n$} The remaining case $k=n$ is treated in complete
analogy to the previous one. We only have to observe that there is
no Rankin-Selberg $L$-function appearing in the normalization
factor. Again,

$$N(s,\pi_p)=\frac{L(1+s,\sigma_p,\mathrm{Sym}^2)}{L(s,\sigma_p,\mathrm{Sym}^2)}A(s,\pi_p)$$
is holomorphic and non-vanishing for $s>0$, see \cite{kim01}, Prop.
4.1, so the poles of $M(s,\pi)$ with $P=P_n$ and $s>0$ are
determined by the holomorphic behavior of the quotient
$$\frac{L(s,\sigma,\mathrm{Sym}^2)}{L(1+s,\sigma,\mathrm{Sym}^2)}.$$
As observed before, $L(1+s,\sigma,\mathrm{Sym}^2)$ is holomorphic
and non-vanishing at $s>0$, so we have to analyze
$L(s,\sigma,\mathrm{Sym}^2)$. If $k=n$, it will be enough for us to
consider the region $s\geq 1$, see Proposition \ref{nows}. Hence we
can deduce that the only possible pole of an Eisenstein series which
interests us can be at $s=1$. As our parameter $s$ comes from
$\Lambda=\frac{ns}{2}\alpha_n$ in the case $k=n$, we see that
actually $t$ must be $\frac{n}{2}$. We have therefore proved

\begin{prop}[Poles for $s=1$]\label{s=1n}
There is an $f\in \textrm{\emph{I}}_{P,\pi}$ such that the Eisenstein
series $E_P(f,s)$ has a pole at $s=1$ if and only if
\begin{enumerate}
\item (In case $n\geq 4$ is even and $\omega_\sigma\equiv\triv$): $\sigma$ is a self-dual Weak Langlands functorial lift of a globally generic cuspidal automorphic representation of $SO_n(\A)$.
\item (In case $n$ is odd or $\omega_\sigma\neq\triv$): $\sigma$ is self-dual.
\end{enumerate}
\end{prop}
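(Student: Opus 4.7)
The plan is to argue in complete analogy with the proof of Proposition \ref{s=1/2}, the setup for the $k=n$ case having already reduced the problem to determining when $L(s,\sigma,\mathrm{Sym}^2)$ has a pole at $s=1$. Indeed, by Kim's result \cite{kim01}, Prop.~4.1, the normalized operator $N(s,\pi_p)$ is holomorphic and non-vanishing for $s>0$; together with $L(1+s,\sigma,\mathrm{Sym}^2)$ being holomorphic and non-vanishing at $s=1$, this transfers the pole question entirely to the numerator $L(s,\sigma,\mathrm{Sym}^2)$. Crucially, unlike in the $k<n$ case of Proposition \ref{s=1/2}, the Rankin--Selberg factor $L(s,\sigma\times\tau)$ is absent from the normalization, since $SO_{2l+1}$ degenerates to the trivial group for $l=0$; consequently no Rankin--Selberg non-vanishing hypothesis enters the final statement.

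I would first observe that self-duality of $\sigma$ is necessary: otherwise $L(s,\sigma\times\sigma)$ is entire, and by the Jacquet--Shalika factorization
\begin{equation*}
L(s,\sigma\times\sigma)=L(s,\sigma,\mathrm{Sym}^2)\,L(s,\sigma,\wedge^2),
\end{equation*}
neither factor can have a pole at $s=1$. Assuming $\sigma$ self-dual, the simple pole of $L(s,\sigma\times\sigma)$ at $s=1$ is carried by exactly one of the two factors, and it remains to decide which.

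The case split follows the one used in the proof of Proposition \ref{s=1/2}, transported from $s=1/2$ to $s=1$ by the change of variable $w:=2s$. If $n$ is odd, the Rankin--Selberg convolutions of \cite{bumfrie} or \cite{kazpat} force the pole into $L(s,\sigma,\mathrm{Sym}^2)$, yielding the odd sub-case of (2). If $n$ is even and $\omega_\sigma\neq\triv$, the same conclusion follows from Prop.~3.7 of \cite{kim01}, yielding the remaining sub-case of (2). For $n\geq 4$ even with $\omega_\sigma\equiv\triv$, the equivalence between a pole of $L(s,\sigma,\mathrm{Sym}^2)$ at $s=1$ and $\sigma$ being a Weak Langlands functorial lift of a globally generic cuspidal automorphic representation of $SO_n(\A)$ is supplied verbatim by the automorphic descent references \cite{ckpss,ginzralsou,gold,soudry}, giving (1). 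Finally, the case $n=2$ with $\omega_\sigma\equiv\triv$ is correctly excluded from (1): at unramified places $L(s,\sigma_p,\wedge^2)=L(s,\omega_{\sigma_p})$, together with temperedness of the $\sigma_p$ inherited from cohomologicity via \cite{delWeil,deligneW-R}, places the pole of $L(s,\sigma\times\sigma)$ into the exterior square factor, so $L(s,\sigma,\mathrm{Sym}^2)$ is holomorphic at $s=1$ and no Eisenstein pole arises.

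The main obstacle is conceptual rather than technical: one must check that every ingredient of the $s=1/2$ analysis transports to $s=1$ in the $k=n$ setting without loss, observing that the passage from $L(2s,\sigma,\mathrm{Sym}^2)$ to $L(s,\sigma,\mathrm{Sym}^2)$ merely shifts the parameter, and that the absence of $\tau$ removes the condition $L(1/2,\sigma\times\tau)\neq 0$ without otherwise altering the argument. Once this cross-check is made, the proposition follows at once.
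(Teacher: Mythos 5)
Your proposal is correct and follows essentially the same route as the paper: reduce via Kim's Prop.~4.1 and the non-vanishing of $L(1+s,\sigma,\mathrm{Sym}^2)$ to the question of a pole of $L(s,\sigma,\mathrm{Sym}^2)$ at $s=1$, and then transport the case analysis (self-duality, parity of $n$, central character, automorphic descent) verbatim from the $s=\tfrac12$, $k<n$ discussion, with the Rankin--Selberg factor absent since $l=0$. Your extra remark on $n=2$ is harmless but vacuous here, as the paper assumes $n\geq 3$.
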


\subsection{Beyond genericity}
Let us remark on the case of {\it non}-generic cuspidal automorphic
representations $\pi$ of $L_k$. First of all, we know that such
representations really exist. This was proved in \cite{jiangsoudry}
by showing that $SO_{2l+1}$ has CAP-representations, i.e. cuspidal
automorphic representations $\tau$ which are nearly equivalent to a
subquotient of $\mathcal A_{Eis}(G)$. CAP representations are
expected to give counterexamples to the naively generalized
Ramanujan Conjecture, which says that for each cuspidal automorphic
representation $\tau$ all local components $\tau_p$ are tempered. On
the other hand, reinterpreting Shahidi's conjecture in
\cite{shahidi1} on the holomorphy of local $L$-functions associated
to tempered local representations, each tempered local L-packet
should contain a locally generic irreducible admissible
representation. Jiang and Soudry have proved this conjecture for
$SO_{2l+1}$ by virtue of \cite{jiangsoudry01} and
\cite{jiangsoudry02}.
%Kim has proved this conjecture for $SO_{2l+1}$ in \cite{kim05}.
So vaguely speaking in terms of L-packets, cuspidal
automorphic forms should disintegrate into the generic ones and the
other part containing the CAP representations - each world being
non-empty. Still, generic cuspidal representations should
``generate'' the whole cuspidal spectrum in the following way: It is
conjectured (cf. \cite{jiangsoudry} Conj. 1.1) that for each
cuspidal automorphic representation $\tau$ of $SO_{2l+1}(\A)$ there
is a (possibly not proper) parabolic subgroup $P'=L'N'$ and a
generic cuspidal automorphic representation $\tau'$ of $L'$ such
that $\tau$ is nearly equivalent to an irreducible constituent of
Ind$^{SO_{2l+1}(\A)}_{P'(\A)}[\tau']$. For $SO_{2l+1}$ the cuspidal
datum $(P',\tau')$ is an invariant for $\tau$ (up to near
equivalence), see \cite{jiangsoudry} Cor. 3.3.(3). In this setup,
generic cuspidal representations $\tau$ should be characterized by
$P'$ being the whole group $SO_{2l+1}$ and CAP representations by
$P'$ being proper.

\section{Residual Eisenstein Cohomology}

\subsection{Residual Eisenstein Classes for $k<n$}
We are ready to state the first of our two main Theorems.

\begin{thm}\label{mainthm1}
Let $0\neq[\beta]$ be a class of type $(\pi,w)$,
$\pi=\chi\wt\pi\in\varphi_{P_k}$ with $\wt\pi=\sigma\hat\otimes\tau$
a globally generic cuspidal automorphic representation of $L_k(\A)$
and $w\in W^{P_k}$ represented by a homomorphism the image of which
contains only functions $f\otimes 1\in\textrm{\emph{I}}_{P_k,\wt\pi}$ for
which $E_{P_k}(f,\Lambda)$ has a pole at the point
$\Lambda_w=-w(\lambda+\rho)|_{(\a_k)_\C}\in C$. Then:
\begin{enumerate}
\item $\pi=\chi(\sigma\hat\otimes\tau)$ is either of the form
\begin{enumerate}
\item $d\chi=\Lambda_w=\frac{k}{2}\alpha|_{(\a_k)_\C}$, $\sigma$ a self-dual cuspidal automorphic representation of $GL_k(\A)$ and such that $L(\frac{1}{2},\sigma\times\tau)\neq 0$. If in addition $k$ is even and $\omega_\sigma\equiv\triv$, then $k\geq    4$ and $\sigma$ is a self-dual Weak Langlands functorial lift of a globally generic cuspidal automorphic representation of $SO_k(\A)$. Or
\item $d\chi=\Lambda_w=k\alpha|_{(\a_k)_\C}$, $k$ even and $\sigma$ a self-dual cuspidal automorphic representation of $GL_k(\A)$ such that $L(s,\sigma\times\tau)$ has a pole at $s=1$.
\end{enumerate}
\item The degree $q'$ of the residual Eisenstein cohomology class constructible from $E^q_\pi([\beta])$ as in Theorem \ref{thm:poles} is necessarily in the following range
\begin{enumerate}
\item If $d\chi=\frac{k}{2}\alpha|_{(\a_k)_\C}$
$$\frac{1}{2}(n^2+n)-\lceil \frac{k}{2}\rceil\leq q'\leq \frac{1}{2}(n^2+n)-1.$$
\item If $d\chi=k\alpha|_{(\a_k)_\C}$
$$\frac{1}{2}(n^2+n)-k\leq q'\leq \frac{1}{2}(n^2+n)-1 \quad. $$
\end{enumerate}
\end{enumerate}
\end{thm}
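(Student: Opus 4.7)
The plan is to combine the restrictions on the evaluation point $\Lambda_w$ derived from the Kostant data in Section 2 with the pole analysis of Section \ref{sec:poles}, and then to invoke Theorem \ref{thm:poles} together with the cuspidal degree bounds of Theorem \ref{boundsq} to construct and locate the resulting Eisenstein class.

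For part (1), I would write $w = w_{(I,J)}$ and $\Lambda_w = t_{(I,J)}\alpha|_{(\a_k)_\C}$, replacing $\sigma$ by a suitable unitary twist so that $d\chi$ may be assumed real. Since $\Lambda_w \in C$ forces $t_{(I,J)} > 0$, Proposition \ref{nows} rules out $t_{(I,J)} \in (0, k/2)$, while the analysis of Section \ref{sec:poles} (for $k < n$) locates the only poles of $M(s,\pi)$ in $s > 0$ at $s = 1/2$ and $s = 1$, corresponding under $s = t/k$ precisely to $t_{(I,J)} = k/2$ and $t_{(I,J)} = k$. Propositions \ref{1/2} and \ref{1} classify the Kostant representatives realizing each of these values (the second case forcing $k$ to be even), and Propositions \ref{s=1/2} and \ref{s=1} convert the existence of an actual pole into the arithmetic conditions on $\sigma$ (self-duality, and in the even-$k$, trivial-central-character sub-case a Weak Langlands functorial lift from $SO_k(\A)$) and on the Rankin-Selberg $L$-function of $\sigma\times\tau$ (non-vanishing at $1/2$ in case (a), pole at $1$ in case (b)).

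For part (2), since $P_k$ is a self-associate maximal parabolic with $\dim\a_k = 1$ and $W(A_k)$ of order two, the remark following Theorem \ref{thm:poles} guarantees that its hypotheses are automatically met here, so the construction produces an Eisenstein class at degree $q' = q + \dim N_k(\R) - 2l(w)$. The decomposition (\ref{dchi}) writes $q = q_{cusp} + l(w)$ where $q_{cusp}$ is the degree of a non-trivial $(\m_k, K_{M_k})$-cohomology class of $\wt\pi_\infty$, so $q' = q_{cusp} + \dim N_k - l(w)$ with $\dim N_k = k(2n-k) - \binom{k}{2}$. Plugging in Theorem \ref{boundsq} for the range of $q_{cusp}$ and the length data from Propositions \ref{1/2} and \ref{1} for $l(w)$ then produces the claimed interval of possible degrees.

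The principal technical obstacle will be the final arithmetic: both $l(w)$ and the bounds on $q_{cusp}$ depend on the parity of $k$ (through $\lfloor k/2\rfloor$), and in case (b) one has to combine the extremal lengths across the sub-cases (i)--(iv) of Proposition \ref{1} --- conveniently summarized by the bracket (\ref{ineq1}) --- with the extremal values of $q_{cusp}$. A patient but essentially routine computation should then collapse the mixed expressions into the uniform bounds $\tfrac{1}{2}(n^2+n) - \lceil k/2\rceil \leq q' \leq \tfrac{1}{2}(n^2+n) - 1$ in case (a) and $\tfrac{1}{2}(n^2+n) - k \leq q' \leq \tfrac{1}{2}(n^2+n) - 1$ in case (b).
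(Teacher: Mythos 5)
Your proposal is correct and follows essentially the same route as the paper: part (1) is exactly the combination of Proposition \ref{nows} (ruling out $0<t_{(I,J)}<\frac{k}{2}$, so the unresolved region $0<s<\frac{1}{2}$ never matters) with Propositions \ref{s=1/2} and \ref{s=1}, and part (2) is the same computation $q'=q-2l(w)+\dim N_k(\R)$ using the length formulas of Propositions \ref{1/2} and \ref{1} (via (\ref{ineq1})) together with the bounds on $q-l(w)$ from Theorem \ref{boundsq}. The final arithmetic you defer is indeed routine and collapses to the stated bounds, just as in the paper.
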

\begin{proof}
Part (1) follows directly from our Propositions \ref{s=1/2} and
\ref{s=1}. For (2) we insert the formula from Proposition \ref{1/2}
for the length of $w=\wij\in W^{P_k}$ giving rise to
$t_{(I,J)}=\frac{k}{2}$ into the equation $q'=q-2l(\wij)+\dim
N_k(\R)$ and then use the bounds for $q-l(w)$ established in Theorem
\ref{boundsq}. We do the same for $w=\wij\in W^{P_k}$ giving rise to
$t_{(I,J)}=k$, using Proposition \ref{1} (\ref{ineq1}). This then
proves (2).
\end{proof}

\subsection{Residual Eisenstein Classes for $k=n$}
It remains to settle the case of the Siegel parabolic subgroup. We
prove

\begin{thm}\label{mainthm2}
Let $0\neq[\beta]$ be a class of type $(\pi,w)$,
$\pi=\chi\wt\pi\in\varphi_{P_n}$ with $\wt\pi=\sigma$ a cuspidal
automorphic representation of $L_n(\A)=GL_n(\A)$ and $w\in W^{P_n}$
represented by a homomorphism the image of which contains only
functions $f\otimes 1\in\textrm{\emph{I}}_{P_n,\wt\pi}$ for which
$E_{P_n}(f,\Lambda)$ has a pole at the point
$\Lambda_w=-w(\lambda+\rho)|_{(\a_n)_\C}\in C$. Then:
\begin{enumerate}
\item $\pi=\chi\sigma$ is of the form
\begin{enumerate}
\item[] $d\chi=\Lambda_w=\frac{n}{2}\alpha|_{(\a_n)_\C}$ and $\sigma$ is self-dual. If in addition $n$ is even and $\omega_\sigma\equiv\triv$, then $n\geq  4$ and $\sigma$ is a self-dual Weak Langlands functorial lift of a globally generic cuspidal automorphic representation of $SO_n(\A)$.
\end{enumerate}
\item The degree $q'$ of the residual Eisenstein cohomology class constructible from $E^q_\pi([\beta])$ as in Theorem \ref{thm:poles} is necessarily in the following range
$$\frac{1}{2}(n^2+n)-\lceil \frac{n}{2}\rceil\leq q'\leq \frac{1}{2}(n^2+n)-1.$$
\end{enumerate}
\end{thm}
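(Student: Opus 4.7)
The plan is to follow exactly the pattern of the proof of Theorem \ref{mainthm1}, but substituting the $k=n$ versions of the pole analysis (subsection on $k=n$ in Section \ref{sec:poles}) and of the Kostant data from Section 2.

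For part (1), I start from the fact that the existence of a non-trivial class of type $(\pi,w)$ forces $-w_{L_n}(\mu_w)=\mu_w$, i.e.\ that ${}^\circ F_w$ is self-dual, and that $\Lambda_w=-w(\lambda+\rho)|_{(\a_n)_\C}$ lies in $\overline C$. Proposition \ref{nows} then implies $t_{(I,J)}\geq n/2$; under the identification $s=2t/n$ proper to the case $k=n$, this is precisely the region $s\geq 1$. The pole analysis of Section \ref{sec:poles} for $k=n$ says that in $s\geq 1$ the Eisenstein series $E_{P_n}(f,s)$ can only have a pole at $s=1$, equivalently at $t=n/2$, so $d\chi=\Lambda_w=\tfrac{n}{2}\alpha|_{(\a_n)_\C}$. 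The precise structural description of $\sigma$ (self-dual, with the Weak Langlands functorial lift constraint in the even-central-character case) is then read off directly from Proposition \ref{s=1n}.

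For part (2), I apply Theorem \ref{thm:poles} to produce from $[\beta]$ a non-trivial residual Eisenstein class in degree
\[
q' \;=\; q+\dim N_n(\R)-2l(w),
\]
where $\dim N_n(\R)=n(2n-n)-\binom{n}{2}=\tfrac{n(n+1)}{2}$. Theorem \ref{boundsq} specialized to $k=n$, $l=0$ provides the two-sided bounds on the admissible cohomological degrees $q$ of $\wt\pi=\sigma$. By part (1) we must have $t_{(I,J)}=n/2$, so Proposition \ref{1/2} determines the length $l(w)$ uniquely in terms of the parity of $n$ (one closed-form expression for $n$ odd, another for $n$ even). Substituting both ingredients into the above formula for $q'$ and simplifying yields the advertised interval.

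All the conceptual ingredients have already been set up; the only remaining work is arithmetic. The delicate point I expect to be the main obstacle is checking that the two parity-dependent length formulas of Proposition \ref{1/2} combine cleanly with the bounds on $q$ (which involve $\lfloor n/2 \rfloor$) so as to produce the uniform two-sided bound $\tfrac{1}{2}(n^2+n)-\lceil n/2\rceil\leq q' \leq \tfrac{1}{2}(n^2+n)-1$. This is a matter of careful bookkeeping across parities rather than any new representation-theoretic input.
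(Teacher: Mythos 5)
Your part (1) follows exactly the paper's route: Proposition \ref{nows} forces $t_{(I,J)}\geq n/2$, i.e.\ $s\geq 1$ under the identification $s=2t/n$, the $k=n$ pole analysis of section \ref{sec:poles} leaves only a possible pole at $s=1$, i.e.\ $t=n/2$, and Proposition \ref{s=1n} supplies the structural conditions on $\sigma$. No issues there.

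In part (2), however, there is a genuine gap in the degree bookkeeping. In Theorem \ref{thm:poles} the integer $q$ is the degree of $[\beta]$ in $H^q(\g,K,\textrm{I}_{P_n,\wt\pi}\otimes S_\chi(\a^*)\otimes E)$, and by the decomposition (\ref{dchi}) a class of type $(\pi,w)$ in that degree corresponds to $(\m,K_M)$-cohomology of $\wt\pi_\infty\otimes{}^\circ F_w$ in degree $q-l(w)$. Theorem \ref{boundsq} (equivalently Proposition \ref{deggl}, since $l=0$ here) bounds precisely this cuspidal degree $q-l(w)$, not $q$ itself. Your plan bounds ``the admissible cohomological degrees $q$ of $\wt\pi=\sigma$'' and substitutes these bounds for the $q$ in $q'=q+\dim N_n(\R)-2l(w)$; carried out literally this is off by the large shift $l(w)$, which by Proposition \ref{1/2} equals $\tfrac{(n+1)^2}{4}$ for $n$ odd and $\tfrac{n(n+2)}{4}$ for $n$ even, so the resulting interval (roughly of size $n^2/4$) would not match the asserted $\tfrac{1}{2}(n^2+n)-\lceil n/2\rceil\leq q'\leq\tfrac{1}{2}(n^2+n)-1$. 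The correct computation is $q'=(q-l(w))+\dim N_n(\R)-l(w)$, with $q-l(w)$ bounded by Theorem \ref{boundsq} for $k=n$, $l=0$; for instance for $n$ even one has $\dim N_n(\R)-l(w)=n^2/4$ and $n^2/4\leq q-l(w)\leq (n^2+2n-4)/4$, which does give the stated interval, and similarly for $n$ odd. So the delicate point is not the parity bookkeeping you single out but the shift by $l(w)$ coming from (\ref{dchi}) --- exactly what the paper means by ``use the bounds for $q-l(w)$ established in Theorem \ref{boundsq}, respectively in Proposition \ref{deggl}''. With that correction your argument coincides with the paper's proof.
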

\begin{proof}
As before, part (1) follows already from Proposition \ref{s=1n}. For
(2) we again insert the formula from Proposition \ref{1/2} for the
length of $w=\wij\in W^{P_n}$ giving rise to $t_{(I,J)}=\frac{n}{2}$
into the equation $q'=q-2l(\wij)+\dim N_n(\R)$ and then use the
bounds for $q-l(w)$ established in Theorem \ref{boundsq},
respectively in Proposition \ref{deggl}.
\end{proof}

\subsection{A remark on the lower bound}
Recall that by \cite{vozu}, Table 8.2, $n$ is the lowest possible
degree in which there could be non-trivial, square-integrable,
residual Eisenstein cohomology other than that coming from the
trivial representation. In fact, $n\leq \frac{1}{2}(n^2+n)-\lceil
\frac{k}{2}\rceil$ for all $n\geq 2$ and $k$, resp.
$n\leq\frac{1}{2}(n^2+n)-k$ for all $n\geq 3$ and even $k$. We do
not know if the lower bounds established by us in Theorems
\ref{mainthm1} and \ref{mainthm2} are in fact sharp.

\section{Regular Eisenstein Cohomology}\label{regec}

\subsection{}
We conclude the paper discussing regular Eisenstein cohomology
classes. Therefore, let $0\neq[\beta]$ be a class of type $(\pi,w)$,
$\pi=\chi\wt\pi\in\varphi_{P_k}$ with $\wt\pi=\sigma\hat\otimes\tau$
a globally generic cuspidal automorphic representation of $L_k(\A)$
and $w\in W^{P_k}$. \\\\Obviously, if either $d\chi$ is neither
$\frac{k}{2}\alpha|_{(\a_k)_\C}$ nor $k\alpha|_{(\a_k)_\C}$ or if
$\wt\pi$ is not of the form described in Theorem \ref{mainthm1} (1)
or \ref{mainthm2} (1), then for any $f\in\textrm{I}_{P_k,\wt\pi}$
the associated Eisenstein series $E_{P_k}(f,\Lambda)$ will be
holomorphic at $\Lambda=d\chi$. This is the content of our section
\ref{sec:poles}. In particular, in this case, the image of a
homomorphism $\beta$ representing the class $[\beta]$ can contain
only tensors $f\otimes\frac{d^{\nu}}{d\Lambda^{\nu}}$ for which the
associated Eisenstein series $E_{P_k}(f,\Lambda)$ is holomorphic at
$\Lambda=d\chi$. If in addition $\nu=0$, i.e.
$\frac{d^{\nu}}{d\Lambda^{\nu}}=1$, then
$E^q_\pi([\beta])$ is a non-trivial regular Eisenstein cohomology
class in degree $q$ with

$$\frac{1}{2}\left(\frac{k(k-1)}{2}+\lfloor\frac{k}{2}\rfloor+l^2+l\right)+l(w)\leq q\leq
\frac{1}{2}\left(\frac{(k-1)(k+4)}{2}-\lfloor\frac{k}{2}\rfloor+l^2+l\right)+l(w),$$
see Theorem \ref{thm:holeis} and Theorem \ref{boundsq}.

\subsection{}
Now let $d\chi$ and $\wt\pi$ be as in Theorem \ref{mainthm1} (1) or
\ref{mainthm2} (1), i.e., there are $f\in\textrm{I}_{P_k,\wt\pi}$
such that the associated Eisenstein series $E_{P_k}(f,\Lambda)$ has a
pole at $\Lambda=d\chi$. Still, if $f$ gives rise to a local
component $f_{s,p}\in\ker A(s,\wt\pi_p)$, where $s$ is according to
$d\chi$ either $\frac{1}{2}$ or $1$, then the zero
$A(s,\wt\pi_p)f_{s,p}=0$ will cancel the simple pole of the global
operator $M(s,\wt\pi)$ and again $E_{P_k}(f,\Lambda)$ will be
holomorphic at $\Lambda=d\chi$. Let $0\neq[\beta]$ be a class of
type $(\pi,w)$, represented by a homomorphism $\beta$ whose image in
$\textrm{I}_{P_k,\wt\pi}\otimes S_\chi(\a^*)$ consists of tensors
$f\otimes\frac{d^{\nu}}{d\Lambda^{\nu}}$ with $f$ as above. That is,
$f$ gives rise to a local component $f_{s,p}\in\ker A(s,\wt\pi_p)$,
$s=\frac{1}{2},1$. Then (cf. section \ref{eisintw})
$$E_{P_k,\pi}(f\otimes\frac{d^{\nu}}{d\Lambda^{\nu}})=E_{P_k}(f,d\chi)$$
if $\nu=1$. Assuming this, it follows now again from Theorem
\ref{thm:holeis} that $E^q_\pi([\beta])$ is a non-trivial regular
Eisenstein cohomology class. Its degree $q$ is bounded by

$$\frac{1}{2}\left(\frac{k(k-1)}{2}+\lfloor\frac{k}{2}\rfloor+l^2+l\right)+l(w)\leq q\leq
\frac{1}{2}\left(\frac{(k-1)(k+4)}{2}-\lfloor\frac{k}{2}\rfloor+l^2+l\right)+l(w),$$
see Theorem \ref{boundsq}.

\small

\end{document}